\documentclass[12pt]{amsart}
\usepackage{amssymb,latexsym,comment,url}
\usepackage[all]{xy}

\newfont{\wncyr}{wncyr10 at 12pt}
\newfont{\wncyrten}{wncyr10 at 10pt}

\newcommand{\Gal}{{\operatorname{Gal}}}
\newcommand{\rank}{\operatorname{rank}}
\newcommand{\PGL}{\operatorname{PGL}}
\newcommand{\Br}{\operatorname{Br}}
\newcommand{\Ob}{\operatorname{Ob}}
\newcommand{\GL}{\operatorname{GL}}
\newcommand{\ra}{\longrightarrow}
\newcommand{\PP}{{\mathbb P}}
\newcommand{\kbar}{{\overline k}}
\newcommand{\F}{{\mathbb F}}
\newcommand{\R}{{\mathbb R}}
\newcommand{\Q}{{\mathbb Q}}
\newcommand{\Z}{{\mathbb Z}}
\newcommand{\isom}{\cong}
\newcommand{\Sha}{\mbox{\wncyr Sh}}

\newenvironment{ProofOf}[1]{\par\noindent{\em Proof of #1.}}%
                        {\hspace*{\fill}\nobreak$\Box$\par\medskip}

\newtheorem{Proposition}{Proposition}[section]
\newtheorem{Theorem}[Proposition]{Theorem}
\newtheorem{Lemma}[Proposition]{Lemma}

\theoremstyle{definition}
\newtheorem{Definition}[Proposition]{Definition}
\newtheorem{Remark}[Proposition]{Remark}
\newtheorem{Example}[Proposition]{Example}

\begin{document}

\title[Visible 2-torsion in the Tate-Shafarevich group]%
{Visible 2-torsion in the Tate-Shafarevich group \\ of an elliptic curve}

\author{Tom~Fisher}
\address{University of Cambridge,
         DPMMS, Centre for Mathematical Sciences,
         Wilberforce Road, Cambridge CB3 0WB, UK}
\email{T.A.Fisher@dpmms.cam.ac.uk}

\date{29th May 2026}  

\begin{abstract}
  We show that every pair of $2$-torsion elements in the
  Tate-Shafarevich group of an elliptic curve are visible in the same
  abelian surface. This was previously only known for a single
  $2$-torsion element.  Our result explains some of the original
  observations on visibility in the paper of Cremona and Mazur.
\end{abstract}

\maketitle

\renewcommand{\baselinestretch}{1.1}
\renewcommand{\arraystretch}{1.3}
\renewcommand{\theenumi}{\roman{enumi}}

\section{Introduction}

Let $E$ be an elliptic curve defined over a number field $k$.  A
torsor (or principal homogeneous space) under $E$ is a pair $(C,\mu)$
where $C$ is a smooth curve of genus one, and $\mu : E \times C \to C$
is morphism (both defined over $k$) inducing a simply transitive
action on $\kbar$-points.  The Weil-Ch\^atelet group $H^1(k,E)$
parametrises the torsors under $E$ up to $k$-isomorphism. The
Tate-Shafarevich group
\[ \Sha(E/k) = \ker \left( H^1(k,E) \to \prod_v H^1(k_v,E) \right) \]
is the subgroup of torsors that are everywhere locally soluble.  This
group is conjecturally finite, and when finite, Cassels \cite{CaIV}
showed it has order a square.  Its non-zero elements correspond to
genus one curves with Jacobian $E$ that are counterexamples to the
Hasse principle.  Mazur introduced the notion of visibility as a way
to study elements of $\Sha(E/k)$.

\begin{Definition}
  An element $\xi \in H^1(k,E)$ is {\em visible} in an abelian variety
  $A$ if there is an inclusion of abelian varieties $\iota : E \to A$
  such that $\xi$ is in the kernel of the induced map
  \[ \iota_* : H^1(k,E) \to H^1(k,A). \]
  Equivalently, $\xi$ is visible in $A$ if the corresponding torsor
  may be realised as a coset of $E$ inside $A$.
\end{Definition} 

Cremona and Mazur \cite{CM, AS2} gave examples, for each
$n \in \{2,3,4,5 \}$, of elliptic curves $E/\Q$ where all of
$\Sha(E/\Q) \isom (\Z/n\Z)^2$ is visible in the same abelian surface.
Mazur \cite{Mazur} showed that every 3-torsion element in $\Sha(E/k)$
is visible in an abelian surface. Klenke \cite{Klenke} showed that
every 2-torsion element in $H^1(k,E)$ is visible in an abelian
surface. An argument using restriction of scalars, first recorded by
Agashe and Stein \cite{AS}, shows that every $n$-torsion element in
$\Sha(E/k)$ is visible in an abelian variety of dimension $n$. Fisher
\cite{invis, vis7} gave examples of $6$ and $7$-torsion elements in
$\Sha(E/\Q)$ that are not visible in an abelian surface, and also of
$7$-torsion elements that are visible in an abelian threefold.

It follows from the restriction of scalars argument that $\Sha(E/k)$
is finite if and only if all its elements are visible in the same
abelian variety.

We prove the following.

\begin{Theorem}
  \label{thm:main}
  Every pair of $2$-torsion elements in $\Sha(E/k)$ are visible in the
  same abelian surface.
\end{Theorem}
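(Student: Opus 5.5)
We would work with the standard gluing construction. Let $F$ be an elliptic curve over $k$ with a Galois-equivariant isomorphism $\psi : E[2] \isom F[2]$, and set $A = (E \times F)/\Gamma_\psi$, where $\Gamma_\psi$ is the graph of $\psi$. Then $\iota : E \to A$ is an inclusion and $A/\iota(E) \isom F/F[2] \isom F$. The long exact sequence in Galois cohomology identifies $\ker(\iota_*)$ with the image of
\[ F(k) \to F(k)/2F(k) \to H^1(k,F[2]) \stackrel{\psi^{-1}}{\ra} H^1(k,E[2]) \to H^1(k,E). \]
So, given $\xi_1,\xi_2 \in \Sha(E/k)[2]$ with lifts $\alpha_1,\alpha_2 \in H^1(k,E[2])$, it suffices to find $F$ and $\psi$ such that $\alpha_1,\alpha_2$, possibly after changing each by an element of $\delta_E(E(k))$, lie in $\psi^*\delta_F(F(k))$. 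Since that image is a group, it is enough to handle the two generators.

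Next we make this explicit. Write $E : y^2 = f(x)$ and $L = k[\theta] = k[x]/(f)$, so that $H^1(k,E[2]) = \ker\bigl(N : L^\times/L^{\times 2} \to k^\times/k^{\times 2}\bigr)$. The curves $F$ with $F[2] \isom E[2]$ are exactly the curves $y^2 = g(x)$, where $g(x) = N_{L/k}(x - \phi(\theta))$ for a Tschirnhausen transform $\phi(\theta) = a + b\theta + c\theta^2$ with $(b,c) \ne (0,0)$. Under this identification, the Kummer map of $F$ sends $(x_0,y_0)$ to the class of $x_0 - \phi(\theta)$. Conversely, any element $t - \phi(\theta) \in L$ with square norm gives a point on $F$ with $x$-coordinate $t$. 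Absorbing the $t_i$ into $a$, the problem becomes the following: find $\nu_1,\nu_2 \in L$ with
\[ \alpha_1\nu_1^2 - \alpha_2\nu_2^2 \in k , \]
with $\alpha_1\nu_1^2$ not in $k$, so that $\phi$ is a genuine Tschirnhausen transform and $g$ is separable. The norm condition is then automatic. Writing out the coefficients of $\theta$ and $\theta^2$, this is an intersection $X$ of two quadrics in $\PP^5$ (coordinates of $\nu_1,\nu_2$), and we need a suitably non-degenerate $k$-point on it.

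The main obstacle is producing this rational point, since the Hasse principle for intersections of two quadrics in $\PP^5$ is not known in general. First we would use $\xi_i \in \Sha$ to show that $X$ is everywhere locally soluble with non-degenerate local points: locally $\alpha_i = \delta_E(P_i)$, i.e.\ $\alpha_i \equiv x(P_i) - \theta$, and then $\nu_i$ can be chosen directly. We would then exploit the special structure of the pencil. Its singular members come from the Galois-stable decomposition of $L \otimes \kbar$, so the pencil has a rank-deficient member defined over $k$, or a Galois-conjugate pair of such members, associated with the cubic $f$. Two routes seem possible. One is to use this to fibre $X$ over a conic or $\PP^1$ and invoke the Colliot-Th\'el\`ene--Sansuc--Swinnerton-Dyer results for pencils with such degenerate members. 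The other, which we would try first, is to exhibit an explicit $k$-rational point. Natural candidates are $\nu_2 = 0$ after twisting $\alpha_1$ by $\alpha_2$, or points built from the genus one quartics representing $\xi_1$, $\xi_2$ and $\xi_1+\xi_2$. If the resulting points are degenerate, we would use the freedom to modify $\alpha_i$ by $\delta_E(E(k))$ and the rationality of $X$ (a quartic del Pezzo cone) to move to a non-degenerate point.
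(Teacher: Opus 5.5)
Your reduction is correct and matches the paper's set-up in \'etale-algebra coordinates. The gluing $A=(E\times F)/\Gamma_\psi$ is Lemma~\ref{vislem}. Your threefold $X=\{\alpha_1\nu_1^2-\alpha_2\nu_2^2\in k\}\subset\PP^5$ is the counterpart of $X_{f,g}$: over $\kbar$ both become $\{z_1z_2=z_3z_4=z_5z_6\}$, with six nodes and three rank-$4$ members in the pencil. Turning a point into $(F,\psi)$ is Proposition~\ref{prop:use-pt}, and your local argument is Lemma~\ref{lem:loc-sol}.

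But the whole arithmetic content of the theorem is the existence of a smooth (non-degenerate) $k$-point on $X$, and you do not supply it. Your first route is unavailable in the main case. If $E(k)[2]=0$, the three rank-$4$ members are permuted transitively by Galois, so there is neither a rank-deficient member over $k$ nor a conjugate pair. This is type (F), which is expressly excluded from \cite[Theorem 3.20]{CSS}, and for which it is not known in general even whether the Brauer--Manin obstruction vanishes.

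Your second route does not work as stated either. A point with $\nu_2=0$ has $\alpha_1\nu_1^2\in k$, which is exactly your excluded degenerate case; with $\nu_1\ne0$ it also forces $\alpha_1$ to be trivial. The same holds for $\nu_1=0$, whatever pair of classes you start from. $X$ is not a cone: a generic member of the pencil is nonsingular, so there is no common vertex, and the singular locus is six isolated nodes. Nor can ``rationality of $X$'' be used to move a degenerate point. $X$ is $k$-birational to a Brauer--Severi threefold (Proposition~\ref{prop:bs}), so it is $k$-rational exactly when it already has a smooth $k$-point. Finally, ``points built from the quartics'' is not yet a construction.

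The missing idea is the extra structure that gives $X$ the smooth Hasse principle. $X$ contains four planes $\Lambda_1,\dots,\Lambda_4$, permuted by Galois and meeting pairwise only at nodes. In your coordinates they are $\nu_1=\gamma\nu_2$, with $\gamma$ running over the square roots of $\alpha_2/\alpha_1$ in $L\otimes_k\kbar$ of a fixed norm. The paper uses them in one of two ways.
\begin{enumerate}
\item It slices $X$ by a hyperplane $H$ through a $k$-rational plane of the split member $u_0u_2-u_1^2=v_0v_2-v_1^2$ that meets $X$ in a conic. $H$ is chosen by weak approximation so that the section is everywhere locally soluble and, by Lemma~\ref{lem:make-smooth}, either smooth or has a $k$-rational singular point. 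A smooth section is a quartic del Pezzo surface containing the four disjoint lines $\Lambda_i\cap H$. These blow down to a degree-$8$ del Pezzo surface, which satisfies the Hasse principle.
\item It shows that $|3H-\sum\Lambda_i|$ maps $X$ birationally onto a Brauer--Severi threefold, regularly on the smooth locus.
\end{enumerate}
Section~\ref{sec:ob} gives a route close to your second idea, but it hinges on a specific trick. Represent $\xi_1+\xi_2\in S^{(2)}(E/k)$ by a quartic $h$, and rescale $f,g,h$ by a common $\lambda\in k^\times$ so that $h$ is soluble. Then $y^2=f$ and $y^2=g$ are the same curve with two maps to $\PP^1$, and the resulting $(2,2)$-form gives an explicit rational parametrisation of $X_{f,g}$.

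A smaller point: when $E(k)[2]\ne0$, the condition $\alpha_1\nu_1^2\notin k$ does not make $g$ separable. For example, in $L=k\times K$ the element $(a,b)$ with $a,b\in k$, $a\ne b$, has a repeated eigenvalue. You need $k[\alpha_1\nu_1^2]=L$, which is why the paper uses Zariski density of $X(k)$ rather than a single point in that case.
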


The proof works by showing that certain complete intersections of two
quadrics in $\PP^5$ satisfy the smooth Hasse principle.

In Section~\ref{sec:2sel} we rephrase Theorem~\ref{thm:main} as a
statement about $2$-Selmer groups. In Section~\ref{sec:qi} we
introduce the relevant quadric intersections and describe some of
their geometry.  In Sections~\ref{sec:hp} and~\ref{sec:bs} we give two
different proofs that these satisfy the smooth Hasse principle. We
complete the proof of Theorem~\ref{thm:main} in
Section~\ref{sec:rel-vis}. In the final two sections we give some
examples and discuss a variant of the proof.

We thank Shiva Chidambaram for sharing some data in connection
with~\cite{BCC} which prompted us to revisit this problem.  A
Magma~\cite{magma} file checking some of the calculations and examples
in this paper is available on the author's webpage.

\section{The $2$-Selmer group}
\label{sec:2sel}

Let $E$ be an elliptic curve defined over a number field $k$, and let
$n \geqslant 2$ be an integer. We write $E[n]$ for the $n$-torsion
subgroup. The $n$-Selmer group
\[ S^{(n)}(E/k) = \ker \left( H^1(k,E[n]) \to \prod_v H^1(k_v,E) \right) \]
parametrises the $n$-coverings of $E$ that are everywhere locally
soluble. It is the middle term in the Kummer exact sequence
\[ 0 \ra E(k)/nE(k) \ra  S^{(n)}(E/k) \ra \Sha(E/k)[n] \ra 0. \]

\begin{Lemma}
\label{vislem}
Let $F/k$ be a second elliptic curve and $\pi: E[n] \to F[n]$ an
isomorphism of $\Gal(\kbar/k)$-modules. If
$\xi, \eta \in S^{(n)}(E/k)$ are in the kernel of the composition
\[ H^1(k,E[n]) \stackrel{\pi_*}{\ra} H^1(k,F[n]) \ra H^1(k,F) \] then
their images in $\Sha(E/k)$ are visible in the same abelian surface.
\end{Lemma}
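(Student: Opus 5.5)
The abelian surface will be $A = (E \times F)/\Delta$, where $\Delta = \{(P,\pi(P)) : P \in E[n]\}$ is the graph of $\pi$. Since $\pi$ is an isomorphism of Galois modules, $\Delta$ is a finite $k$-subgroup scheme of $E \times F$, so $A$ is an abelian surface over $k$. The map $\iota : E \to A$, $P \mapsto (P,0) \bmod \Delta$, has kernel $E \cap \Delta = \{P \in E[n] : \pi(P) = 0\} = 0$, so $\iota$ is an inclusion of abelian varieties. Similarly $\iota' : F \to A$ is an inclusion, and we will also use the quotient map $q : A \to A/\iota(E) \cong F/F[n] \cong F$. Here the last isomorphism is induced by multiplication by $n$ on $F$.

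The plan is to show directly that $\iota_*$ kills the image of every $\xi$ in the kernel of $S^{(n)}(E/k) \to H^1(k,F)$. Such $\xi$ form a subgroup, so the conclusion for both $\xi$ and $\eta$ follows at once for the same $A$. The key step is a commutative diagram relating $\iota_*$ to the composition in the statement. The inclusion $E[n] \to E$ followed by $\iota$ sends $P$ to $(P,0) \equiv (0,-\pi(P)) \bmod \Delta$. So the composite $E[n] \to E \to A$ equals $E[n] \xrightarrow{-\pi} F[n] \to F \xrightarrow{\iota'} A$. Taking Galois cohomology, the image of $\xi$ in $H^1(k,E)$ maps under $\iota_*$ to $-\iota'_*$ of the image of $\pi_*\xi$ in $H^1(k,F)$. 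By hypothesis the latter image is zero. Hence $\iota_*$ kills the image of $\xi$ in $H^1(k,E)$, which lies in $\Sha(E/k)$ because $\xi$ is a Selmer element. The same argument applies to $\eta$.

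Finally, I will check that the definition of visibility is met: a single inclusion $\iota : E \to A$ whose $\iota_*$ kills both classes. This is exactly what has been shown. The identification of the image of $\xi$ under $H^1(k,E[n]) \to H^1(k,E)$ with its image in $\Sha(E/k)[n]$ is just the Kummer sequence, so that step is routine.

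The only point needing care is the identity $(P,0) \equiv (0,-\pi(P))$ in $A$, and its compatibility with the maps on cohomology. The identity holds because $(P,\pi(P)) \in \Delta$. Compatibility holds because the two composites $E[n] \to A$ are equal as morphisms of group schemes over $k$, so their induced maps on $H^1$ agree. I expect no genuine obstacle. The real difficulty of the paper lies in later constructing suitable $F$ and $\pi$ for which two given Selmer elements lie in the kernel, not in this lemma.
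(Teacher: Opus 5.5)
Your proof is correct and is essentially the paper's argument: the same surface $A=(E\times F)/\Delta$ with $\Delta$ the graph of $\pi$, and the same compatibility on $H^1$ of the two routes $E[n]\to E\to A$ and $E[n]\xrightarrow{\pi}F[n]\to F\to A$. You also track the sign $(P,0)\equiv(0,-\pi(P))$ explicitly. This is slightly more careful than the paper's diagram, which commutes only up to this sign; the sign is harmless because negation does not change kernels.
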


\begin{proof} We identify $E[n]$ and $F[n]$ via $\pi$ and write
  $\Delta = E[n] = F[n]$.  We embed $\Delta$ diagonally in
  $E \times F$ and let $A = (E \times F)/\Delta$.  There is a
  commutative diagram
  \[ \xymatrix{ H^1(k,\Delta) \ar[d] \ar[r]
      & H^1(k,F) \ar[d] \\
      H^1(k,E) \ar[r] & H^1(k,A) \rlap{.} } \]
  We are given that $\xi$ and $\eta$ are in the kernel of the top
  horizontal map. By the commutativity of the diagram,
  their images in $\Sha(E/k)$ % \subset H^1(k,E)$
  are in the kernel of the bottom horizontal map, and so visible in $A$.
\end{proof}

We now take $n=2$. Elements of the $2$-Selmer group $S^{(2)}(E/k)$ may
be represented by binary quartics. See for example
\cite{AKM3P,BSDI,Cr,CF,Weil54}.

\begin{Definition}
A {\em binary quartic} is a homogeneous polynomial
$f \in k[x,z]$ of degree $4$. The invariants of the binary quartic
\begin{equation*}
f(x,z) = a x^4 + b x^3 z + c x^2 z^2 + d x z^3 + e z^4
\end{equation*}
are
\begin{equation}
\label{def:IJ}
\begin{aligned}
I &= 12 a e - 3 b d + c^2 , \\
J &= 72 a c e - 27 a d^2 - 27 b^2 e + 9 b c d - 2 c^3.
\end{aligned}
\end{equation}
A binary quartic is {\em non-singular} if it has no repeated roots
(i.e. linear factors) over $\kbar$,
equivalently if its discriminant $4 I^3 - J^2$ is non-zero.
\end{Definition}

A non-singular binary quartic $f$ defines a smooth projective curve of
genus one with affine equation $y^2 = f(x,1)$. The projective curve
is defined either by gluing affine pieces, or directly as the curve
with equation $y^2 = f(x,z)$ in the weighted projective plane
$\PP(1,2,1)$. This curve is a $2$-covering of its Jacobian
\begin{equation}
\label{jac}
 E_{I,J} : \quad y^2 = x^3 - 27 I x - 27 J. 
\end{equation}
A binary quartic is {\em soluble} if the corresponding genus one curve
has a $k$-point, and {\em everywhere locally soluble} if it has a
$k_v$-point for all places $v$.

\begin{Definition}
  Let $K/k$ be any field extension.  Binary quartics $f$ and $g$ are
  {\em properly $K$-equivalent} if
  \[ f(x,z) = \frac{1}{(p s - q r)^2} g (p x + r z, q x + s z) \]
  for some $p, q, r, s \in K$ with $p s - q r \not= 0$.
\end{Definition}

Binary quartics that are properly $k$-equivalent have the same
invariants. The converse is true for non-singular binary quartics over
an algebraically closed field.  We may write any elliptic curve $E$
defined over $k$ in the form $E_{I,J}$ for some $I,J \in k$ with
$4 I^3 - J^2 \not= 0$.  The $2$-Selmer group $S^{(2)}(E_{I,J}/k)$ may
then be identified with the set of proper $k$-equivalence classes of
everywhere locally soluble binary quartics with invariants $I$ and
$J$.

The genus one curve defined by a non-singular binary quartic $f$ is a
double cover of $\PP^1$ ramified over $4$ points.  These $4$ points
are the fibre of the $2$-covering above the identity on $E$,
and form a torsor under $E[2]$. In Section~\ref{sec:rel-vis} we will
use the fact that the class of $f$ in $H^1(k,E[2])$ is the class
of this torsor.

\section{Quadric intersections}
\label{sec:qi}

We work over a field $k$ with
$\operatorname{char}(k) \not= 2,3$, with algebraic closure $\kbar$.
Let $F(x_1, \ldots, x_n)$ and $G(x_1, \ldots, x_n)$ be a pair of
quadratic forms in $n$ variables. Let $A$ and $B$ be the $n \times n$
symmetric matrices of second partial derivatives of $F$ and $G$.  Then
$\det(\lambda A + \mu B)$ is a binary form of degree $n$.  By abuse of
notation we write this binary form as $\det(\lambda F + \mu G)$. The
next two lemmas are well known. The first is \cite[Proposition 2.1]{Reid}.

\begin{Lemma}
  \label{lem:qi-smooth}
  The quadric intersection $X = \{F = G = 0\} \subset \PP^{n-1}$ is
  smooth (and of codimension $2$) if and only if
  $\det(\lambda F + \mu G)$ has no repeated factors.
\end{Lemma}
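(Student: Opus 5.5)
The approach is to work over $\kbar$ and use the Jacobian criterion, after simultaneously diagonalising the pencil whenever the discriminant binary form is separable. Write $\Phi(\lambda,\mu)=\det(\lambda A+\mu B)$.

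First, suppose $\Phi$ has no repeated factors. Then $\Phi$ is not identically zero, so after a change of basis in the pencil we may assume $\det A\neq 0$. The roots of $\det(B-tA)$ are then distinct. Hence $A^{-1}B$ has $n$ distinct eigenvalues and is diagonalisable. Its eigenvectors are $A$-orthogonal, because $A^{-1}B$ is self-adjoint for the bilinear form $A$. So in suitable coordinates over $\kbar$ we have
\[ F=\sum x_i^2, \qquad G=\sum t_i x_i^2, \qquad t_i \text{ distinct}. \]
A singular point $P$ of $X$ is one where the gradients $(x_i)$ and $(t_ix_i)$ are linearly dependent, i.e.\ $\alpha x_i+\beta t_i x_i=0$ for all $i$ with $(\alpha,\beta)\neq 0$. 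Since the $t_i$ are distinct, at most one coordinate $x_i$ is nonzero. But then $F(P)=x_i^2\neq 0$, a contradiction. So $X$ is smooth. It is also of codimension $2$: $F$ and $G$ share no common factor, as any common component would consist of singular points (since $X$ would then be a hypersurface). Alternatively, smoothness everywhere together with the Jacobian matrix having rank $2$ gives codimension $2$ directly.

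Conversely, suppose $X$ is smooth of codimension $2$, and suppose for contradiction that $\Phi$ has a repeated root. Change basis in the pencil so that the root is $\mu=0$, i.e.\ the quadric $F$ is singular with $\Phi$ vanishing to order $\ge 2$ at $(1:0)$. Consider two cases.
\begin{itemize}
\item If $\rank A\le n-2$, the singular locus of $\{F=0\}$ is a linear space of dimension $\ge 1$ in $\PP^{n-1}$. It therefore meets $\{G=0\}$ in some point $P$. At $P$ we have $\nabla F=0$, so the Jacobian has rank $<2$ and $X$ is singular.
\item If $\rank A=n-1$ with kernel spanned by $v$, then
\[ \frac{d}{d\mu}\det(A+\mu B)\Big|_{\mu=0}=\operatorname{tr}(\operatorname{adj}(A)B)=c\,G(v) \]
up to a nonzero constant $c$, because $\operatorname{adj}(A)$ is a nonzero multiple of $vv^T$. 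A double root forces $G(v)=0$. Then $P=v$ lies on $X$ (as $F(v)=0$ too) and $\nabla F(P)=0$, so again $X$ is singular.
\end{itemize}
In both cases we contradict smoothness.

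The main obstacle is the derivative computation in the rank-$(n-1)$ case. I will justify it with the formula
\[ \frac{d}{d\mu}\det(M(\mu))=\operatorname{tr}\bigl(\operatorname{adj}(M)\,M'\bigr), \]
together with $\operatorname{adj}(A)$ having rank $1$ and image spanned by $v$. Since $A$ is symmetric, this gives $\operatorname{adj}(A)=c\,vv^T$ with $c\neq 0$. The hypothesis $\operatorname{char}(k)\neq 2$ is used in identifying quadratic forms with symmetric matrices, and also in the diagonalisation step.
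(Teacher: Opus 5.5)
Your proof is correct. For the implication ``repeated root $\Rightarrow$ $X$ singular'' you follow the paper's argument. A repeated root means the corresponding quadric has rank $<n-1$, or has rank $n-1$ with its singular point lying on $X$; either way $X$ acquires a singular point. Your Jacobi-formula computation $\frac{d}{d\mu}\det(A+\mu B)|_{\mu=0}=c\,v^{T}Bv$ justifies this characterisation, which the paper only asserts.

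For the other implication you take a genuinely different route. The paper argues by contrapositive and locally. By the Jacobian criterion, a singular point $P$ of $X$ lies in the singular locus of some quadric of the pencil, say $F$. Moving $P$ to $(1:0:\cdots:0)$, the form $F$ involves only $x_2,\dots,x_n$, and $G$ has no $x_1^2$ term because $P\in X$. So the first row and column of $\lambda A+\mu B$ are divisible by $\mu$ and the corner entry is $0$, which gives $\mu^2\mid\det(\lambda A+\mu B)$. You instead use separability of the discriminant to simultaneously diagonalise the pencil over $\kbar$, and then check the Jacobian directly.

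The paper's argument is shorter and needs no global linear algebra. It also says more: it identifies the repeated root as the one belonging to the quadric singular at $P$, so both halves of its proof rest on the same local picture at one singular quadric. Your argument costs a little more (self-adjointness of $A^{-1}B$ with respect to $A$, and square roots in $\kbar$ to normalise). In return it yields the normal form $F=\sum x_i^2$, $G=\sum t_i x_i^2$ with distinct $t_i$ for smooth pencils, which is useful in its own right.
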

\begin{proof}
  By the Jacobian criterion for smoothness, any singular point on $X$
  must be a singular point on some quadric in the pencil spanned by
  $F$ and $G$. It therefore gives a root of $\det(\lambda F + \mu G)$.
  This is a repeated root, as may be seen by reducing to the
  case where $F$ is a quadratic form in $x_2, \ldots, x_n$ only, and
  $G$ has no term $x_1^2$. Conversely, a root of the binary form is a
  repeated root if and only if the corresponding quadric has rank
  less than $n-1$, or it has rank exactly $n-1$ and its singular
  point lies on $X$. In both these cases $X$ has a singular point.
\end{proof}

\begin{Lemma}
  \label{lem:qi-jac}
  Let $X$ be as in Lemma~\ref{lem:qi-smooth} with $n=4$. Then $X$ is a
  smooth curve of genus one. If $\rank F = 3$ then $X$ is a
  $2$-covering of its Jacobian elliptic curve $y^2 = \det(xF + G)$.
\end{Lemma}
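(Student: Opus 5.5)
The plan is to reduce to the case $F = G' (x_1,x_2,x_3)$ of rank $3$ and project $X$ from the singular point of the cone $\{F=0\}$.

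\emph{Genus one.} Since $\det(\lambda F+\mu G)$ has no repeated factors, Lemma~\ref{lem:qi-smooth} shows that $X$ is a smooth complete intersection of two quadrics in $\PP^3$. Adjunction gives $K_X = \mathcal{O}_X(2+2-4)=\mathcal{O}_X$, so $X$ is a smooth curve of genus one. Alternatively one computes the Hilbert polynomial $4t$.

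\emph{Rank $3$: reduction to a quartic.} If $\rank F = 3$, change coordinates over $k$ so that $F$ involves only $x_1,x_2,x_3$. The vertex $P=(0:0:0:1)$ of the cone $\{F=0\}$ is then $k$-rational, and $P \notin X$ by smoothness (cf.\ the proof of Lemma~\ref{lem:qi-smooth}), so $G(P)\neq 0$. Projecting from $P$ maps $X$ onto the conic $C=\{F=0\}\subset \PP^2$. Writing $G = g_0 x_4^2 + 2 \ell(x_1,x_2,x_3) x_4 + q(x_1,x_2,x_3)$ with $g_0 \ne 0$, the fibre over a point of $C$ is given by a quadratic in $x_4$ with discriminant $\ell^2 - g_0 q$. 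Hence $X$ is a double cover of $C$, branched where $\ell^2-g_0q$ vanishes on $C$.

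If $C$ has a $k$-point, parametrise $C\cong\PP^1$ by binary quadratics. Then $X$ becomes $y^2 = f(x,z)$, where $f$ is the pullback of $\ell^2 - g_0 q$, a binary quartic. By Section~\ref{sec:2sel}, $X$ is a $2$-covering of $E_{I(f),J(f)}$. In general $C$ need not have a rational point. There are two ways to handle this: work over $\kbar$ with Galois descent, or use the covariant description directly. Concretely, $X \to C$ composed with the natural map to the Jacobian is a $2$-covering (the degree-$4$ curve $X\subset\PP^3$ has $\mathcal{O}_X(1)$ of degree $4$, and $[2]$-coverings correspond to such embeddings with $\mathcal{O}_X(1)=2\cdot(\text{deg-}2\text{ class from }C)$). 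So the $2$-covering structure is intrinsic and is defined over $k$, since the map $X\to C$ is.

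\emph{Identifying the Jacobian.} This is the main obstacle: showing that $\operatorname{Jac}(X)\cong\{y^2=\det(xF+G)\}$. The approach is invariant-theoretic. Work over $\kbar$ and put the pencil in a normal form, e.g.\ $F = x_1x_3 - x_2^2$, so that $C$ is parametrised by $(s^2:st:t^2)$ and $f(s,t) = \ell^2 - g_0 q$ evaluated there. Then compute $\det(xF+G)$. Since $F$ has rank $3$, this binary form has degree exactly $3$ in $x$ (cubic times a constant), and one checks that it equals, up to scaling $x,y$ over $k$, the cubic $x^3-27I(f)x-27J(f)$ with leading coefficient $-\tfrac14 g_0 \cdot(\ldots)$. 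Both sides are $\mathrm{SL}$-covariant in the appropriate sense, so it suffices to verify the identity on a generic family, e.g.\ $G$ diagonalised relative to $F$ where possible. This is a finite polynomial identity check. The scaling constants are absorbed by the isomorphism $y^2=c\,h(x)\cong y^2=h'(x)$ given by $x\mapsto cx$, $y\mapsto c^2y$, which is defined over $k$. This gives the claimed model of the Jacobian.
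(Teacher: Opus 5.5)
Your genus-one argument and the projection from the $k$-rational vertex $P$ are fine. When the conic $C=\{F=0\}$ has a $k$-point, your model $X\cong\{y^2=f(s,t)\}$ over $k$, with $y=g_0x_4+\ell$ and $f(s,t)=(\ell^2-g_0q)(s^2,st,t^2)$, is the standard construction the paper simply cites from \cite{AKM3P} and \cite{4desc}.

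The gap is in the step you yourself call the main obstacle. A normal-form check over $\kbar$ can only show that $\operatorname{Jac}(X)$ and $y^2=\det(xF+G)$ are isomorphic over $\kbar$, i.e.\ that they are twists of one another. The content of the lemma over $k$ is exactly which twist. Your closing claim that the scaling constants are ``absorbed'' is wrong. The curve $y^2=c\,h(x)$ is the quadratic twist by $c$ of $y^2=h(x)$, and no $k$-rational change of variables such as $x\mapsto cx$, $y\mapsto c^2y$ alters that. In general it is not $k$-isomorphic to $y^2=h(x)$ unless $c\in k^{\times 2}$. As written, your argument would equally ``prove'' the lemma with $\det(xF+G)$ replaced by $-\det(xF+G)$, which is false in general.

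What must be shown is that the constant is a square, and it is. A Schur complement in $x_4$ gives $\det(xF+G)=-\frac{2}{g_0^2}\det\bigl((\ell^2-g_0q)-g_0xF\bigr)$. Now suppose $C(k)\neq\emptyset$. After a $k$-linear change of $x_1,x_2,x_3$ and a rescaling of $F$ (neither affects the $k$-isomorphism class of $y^2=\det(xF+G)$), you may take $F=x_1x_3-x_2^2$ and write $\ell^2-g_0q=Q_f+\alpha F$ with $\alpha\in k$. Lemma~\ref{lem:first-props}(i) then gives
\[ \det(xF+G)=\Bigl(\frac{2}{27g_0}\Bigr)^2\bigl(T^3-27IT-27J\bigr), \qquad T=9(g_0x-\alpha), \]
so $y^2=\det(xF+G)$ is $k$-isomorphic to $E_{I,J}$. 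The leading coefficient $4g_0$ in $x$ is not a square; it is the combination with the rescaling $T$ that makes the constant a square.

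This suffices for the paper's application, since in Proposition~\ref{prop:use-pt} the conic has the $k$-point $(u_0:u_1:u_2)$. For $C(k)=\emptyset$, however, there is no binary quartic over $k$, so $I(f)$ and $J(f)$ are not defined over $k$. Your ``Galois descent'' is named rather than carried out. You need a real argument fixing the twist, or the general result of \cite{AKM3P}.

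Finally, there is no natural map from $C$ to the Jacobian. The correct formulation is that the fibres of $X\to C$ give a $k$-rational divisor class $D$ of degree $2$, and $P\mapsto[2P-D]$ is the $2$-covering $X\to\operatorname{Jac}(X)$ over $k$.
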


\begin{proof}
  This is a special case of a standard construction used in $4$-descent.
  See for example \cite{AKM3P} or \cite{4desc}.
\end{proof}

Given a binary quartic form
\begin{equation*}
f(x,z) = a x^4 + b x^3 z + c x^2 z^2 + d x z^3 + e z^4,
\end{equation*}
we define the {\em associated quadratic form}
\begin{equation*}
Q_f(x_0,x_1,x_2) = a x_0^2 + b x_0 x_1
  + \frac{c}{3} (x_0 x_2 + 2 x_1^2) + d x_1 x_2 + e x_2^2.
\end{equation*}
This choice is made so that
\[ Q_f(x^2,xz,z^2) = f(x,z) \]
and in addition the following two properties hold.

\begin{Lemma}
  \label{lem:first-props}
  Let $Q(x_0,x_1,x_2) = x_0 x_2 - x_1^2$. Let $f$ be a binary quartic
  form, with invariants $I$ and $J$ as defined by~\eqref{def:IJ}.
  \begin{enumerate} \item
    We have $\det(x Q - 9 Q_f) = 2(x^3 - 27 I x - 27 J)$.
  \item If $\widetilde{f}(x,z) = f(px + rz,qx+sz)$ then
    $Q_{\widetilde{f}}(x_0,x_1,x_2)
    =  Q_f(\widetilde{x}_0,\widetilde{x}_1,\widetilde{x}_2)$
    where
    \begin{align*}
    \widetilde{x}_0 &= p^2 x_0 + 2pr x_1 + r^2 x_2, \\
    \widetilde{x}_1 &=  pq x_0 + (ps +qr) x_1 + rs x_2, \\
    \widetilde{x}_2 &=  q^2 x_0 + 2qs x_1 + s^2 x_2.
    \end{align*}
    Moreover $ Q(\widetilde{x}_0,\widetilde{x}_1,\widetilde{x}_2)
    = (ps - qr)^2 Q(x_0,x_1,x_2)$.
\end{enumerate}
\end{Lemma}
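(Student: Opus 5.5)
Both parts are polynomial identities in the coefficients $a,b,c,d,e$ (and $p,q,r,s$), so I will verify them by direct computation, organised so the algebra stays manageable.

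For (i), I first write down the Gram matrices. With the convention that the matrix of a form is its Hessian, $Q = x_0x_2 - x_1^2$ has matrix with entries $1$ in positions $(0,2),(2,0)$ and $-2$ in position $(1,1)$, zeros elsewhere. The form $Q_f$ has matrix
\[ \begin{pmatrix} 2a & b & c/3 \\ b & 4c/3 & d \\ c/3 & d & 2e \end{pmatrix}. \]
Hence $xQ - 9Q_f$ has matrix
\[ \begin{pmatrix} -18a & -9b & x-3c \\ -9b & -2x-12c & -9d \\ x-3c & -9d & -18e \end{pmatrix}. \]
I then expand this $3\times 3$ determinant along the middle entry and the corner entries, and collect powers of $x$. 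The $x^3$ coefficient is $2$, coming from $-(-2x)\cdot x\cdot x$. The $x^2$ coefficient should vanish; it gets $-12c$ from the middle term and $+12c$ from the cross terms $2\cdot(-2x)\cdot x\cdot(-3c)$ appropriately signed. The $x$ and constant coefficients should then match $-54I$ and $-54J$ using the definitions in~\eqref{def:IJ}. This is routine bookkeeping; the only risk is sign or normalisation slips, and I would double-check it against the test case $f = x^4 + z^4$.

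For (ii), the key observation is that the substitution $(x_0,x_1,x_2) = (x^2,xz,z^2)$ intertwines the $\GL_2$ action on binary forms with the symmetric-square representation. Indeed, $\widetilde{x}_0,\widetilde{x}_1,\widetilde{x}_2$ evaluated at $(x^2,xz,z^2)$ give $(px+rz)^2$, $(px+rz)(qx+sz)$ and $(qx+sz)^2$. It follows that $Q_f(\widetilde{x})$ restricted to the conic $Q=0$ equals $f(px+rz,qx+sz) = \widetilde f(x,z)$, and likewise for $Q_{\widetilde f}$. Two ternary quadratic forms agreeing on the image of the Veronese map differ by a multiple of $Q$. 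So $Q_{\widetilde f}(x) - Q_f(\widetilde x) = \lambda Q(x)$ for some scalar $\lambda$, and it remains to show $\lambda = 0$.

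The statement $Q(\widetilde x) = (ps-qr)^2 Q(x)$ is a short direct expansion, or alternatively follows from the same Veronese argument together with a comparison of determinants. To kill $\lambda$, I will characterise $Q_f$ intrinsically: it is the unique quadratic form restricting to $f$ on the conic that is apolar to (orthogonal to, under the dual pairing) the invariant form $Q$. Concretely, the coefficients $c/3$ and $2c/3$ are chosen so that the pairing of $Q_f$ with the adjugate of $Q$ is zero. Since the action preserves $Q$ up to the scalar $(ps-qr)^2$, it preserves this apolarity condition, so both $Q_{\widetilde f}$ and $Q_f(\widetilde x)$ satisfy it, and therefore $\lambda = 0$.

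The main obstacle is establishing this apolarity characterisation cleanly. If the conceptual route proves awkward, I would fall back on a generator argument. The identity is multiplicative in the matrix $\begin{pmatrix} p & q\\ r & s\end{pmatrix}$, so it suffices to check it for diagonal matrices, for the swap $x\leftrightarrow z$, and for the unipotent $x \mapsto x+rz$. The diagonal and swap cases are immediate. The unipotent case requires comparing only the single coefficient of $x_0x_2$ (equivalently of $x_1^2$), which is a one-line computation.
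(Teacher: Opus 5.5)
Your proposal is correct. Part (i) is the same direct expansion the paper has in mind (its proof is just ``a direct calculation''), and your Gram matrix does give $2x^3-54Ix-54J$.

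For (ii) you take a more structured route than expanding all six coefficients of $Q_f(\widetilde x)$. The Veronese observation shows that $Q_{\widetilde f}-Q_f(\widetilde x)$ and $Q(\widetilde x)$ are both scalar multiples of $Q$, and apolarity then pins down the scalar. What this buys is an explanation of the normalisation $\frac{c}{3}(x_0x_2+2x_1^2)$. It is the unique quadratic form $R$ with $R(x^2,xz,z^2)=f$ and $\operatorname{tr}(M_R\,\mathrm{adj}\,M_Q)=0$. This condition is exactly the vanishing of the $x^2$-coefficient in (i), since that coefficient of $\det(xM_Q-9M_{Q_f})$ equals $-9\operatorname{tr}(\mathrm{adj}(M_Q)M_{Q_f})$.

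Your reduction also makes the generator fallback unnecessary: after the Veronese step a single coefficient suffices. For general $p,q,r,s$, the $x_0x_2$-coefficient of both $Q_f(\widetilde x)$ and $Q_{\widetilde f}$ is $2ap^2r^2+bpr(ps+qr)+\tfrac{c}{3}(p^2s^2+4pqrs+q^2r^2)+dqs(ps+qr)+2eq^2s^2$.

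Two small points to tidy up. First, comparing determinants only gives $\kappa^3=(ps-qr)^6$ for $Q(\widetilde x)=\kappa Q$. Use the direct $x_1^2$-coefficient instead, $4pqrs-(ps+qr)^2=-(ps-qr)^2$. Second, your invariance argument for apolarity, and the generator argument, only cover $ps-qr\neq0$. Note that both sides of (ii) are polynomial in $p,q,r,s$, so the singular case follows by Zariski density.
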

\begin{proof}
  A direct calculation.
\end{proof}

\begin{Definition}
  \label{def:Xfg}
  Let $f$ and $g$ be a pair of non-singular binary quartics with
  the same invariants $I$ and $J$. The {\em associated quadric
  intersection} is 
\[ X_{f,g} = \left\{ \begin{aligned} u_0 u_2 - u_1^2 &= v_0 v_2 - v_1^2 \\
Q_f(u_0,u_1,u_2) &= Q_g(v_0,v_1,v_2) \end{aligned} \right\} \subset \PP^5. \]
\end{Definition}

In Section~\ref{sec:hp} we show that quadric intersections of the form
$X_{f,g}$ satisfy the smooth Hasse principle. First however we
consider their geometry.

By Lemma~\ref{lem:first-props}(i) the quadratic forms $F$ and $G$
defining $X_{f,g}$ satisfy
\begin{equation}
  \label{cubic-squared}
  \det(xF - 9G) =  -4(x^3 - 27 I x - 27J)^2.
\end{equation}
In particular the pencil of quadrics defining $X_{f,g}$ has 3 singular
fibres. Each is singular along a line of the form
\[  \{ (\lambda u_0 : \lambda u_1 : \lambda u_2 : \mu v_0 : \mu v_1 : \mu v_2 ) \mid (\lambda:\mu) \in \PP^1 \} \] 
for some $(u_0:u_1:u_2),(v_0:v_1:v_2) \in \PP^2$ with $u_0 u_2 - u_1^2 \not=0$
and $v_0 v_2 - v_1^2 \not=0$.
Such a line meets $X_{f,g}$ in 2 distinct
points swapped by the involution
\begin{equation}
\label{invol}
(u_0:u_1:u_2:v_0:v_1:v_2) \mapsto (u_0:u_1:u_2:-v_0:-v_1:-v_2).
\end{equation}
Therefore $X_{f,g}$ has exactly $6$ singular points.

\begin{Remark}
\label{rem:8planes}
We may change coordinates over $\kbar$ so that $X_{f,g}$ has equations
$\{ x_1 x_2 = x_3 x_4 = x_5 x_6 \} \subset \PP^5$.  It follows that
$X_{f,g}$ contains exactly $8$ planes.
\end{Remark}  

We recall from Section~\ref{sec:2sel} that binary quartics $f$ and $g$
are properly $\kbar$-equivalent if there exist $p,q,r,s \in \kbar$
with $ps-qr \not=0$ and
\[ f(x,z) = \frac{1}{(ps-qr)^2} g(px + rz,qx+sz). \]
This relation is unchanged if we multiply $p,q,r,s$ through by the
same non-zero scalar. It is natural to regard such equivalences as the
same.  With this convention, there are exactly $4$ proper
$\kbar$-equivalences relating $f$ and $g$, and these may be computed
by factoring a $(4,4)$-form as a product of $(1,1)$-forms, as
described in \cite[Theorem 12]{CF}.  Corresponding to each proper
equivalence there is a plane $\Lambda \subset X_{f,g}$ with equations
\begin{align*}
  (ps - qr) v_0 &= p^2 u_0 + 2pr u_1 + r^2 u_2, \\
  (ps - qr) v_1 &= pq u_0 + (ps +qr) u_1 + rs u_2,\\
  (ps - qr) v_2 &= q^2 u_0 + 2qs u_1 + s^2 u_2.
\end{align*}

\begin{Lemma}
\label{lem:2planes}
  The planes corresponding to distinct proper
  equivalences intersect only at singular points of $X_{f,g}$.
\end{Lemma}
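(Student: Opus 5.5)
The plan is to parametrise the intersection $\Lambda_M \cap \Lambda_N$ of two of these planes explicitly, show that it is at most a single point, and then argue that a smooth point of $X_{f,g}$ cannot lie on two planes of $X_{f,g}$ meeting only in that point.

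\emph{Step 1: the intersection is at most one point.} Write $\sigma : \GL_2(\kbar) \to \GL_3(\kbar)$ for the linear map $(x_0,x_1,x_2) \mapsto (\widetilde{x}_0,\widetilde{x}_1,\widetilde{x}_2)$ of Lemma~\ref{lem:first-props}(ii). This is the symmetric square representation, up to the obvious change of basis. The plane attached to a proper equivalence with matrix $M$ is then
\[ \Lambda_M = \{ (u : v) \mid v = (\det M)^{-1} \sigma(M) u \}. \]
Let $M$ and $N$ be distinct proper equivalences, so $T = N^{-1}M$ is not a scalar matrix.

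A point $(u:v)$ lies in $\Lambda_M \cap \Lambda_N$ exactly when $\sigma(T) u = (\det T)\, u$. Indeed, if $u = 0$ then also $v = 0$, so we may assume $u \neq 0$. The identity needed here is $\sigma(N)^{-1}\sigma(M) = \sigma(T)$, which holds because $\sigma$ is a homomorphism.

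Let $\alpha, \beta$ be the eigenvalues of $T$. Then $\sigma(T)$ has eigenvalues $\alpha^2, \alpha\beta, \beta^2$, and $\det T = \alpha\beta$. There are two cases.
\begin{itemize}
\item If $\alpha \neq \beta$, then $\alpha\beta \neq \alpha^2, \beta^2$, so the eigenvalue $\alpha\beta$ is simple.
\item If $\alpha = \beta$, then $T$ is a non-trivial Jordan block, since $T$ is not scalar. Hence $\sigma(T)$ is a single $3\times 3$ Jordan block, and its eigenspace is again $1$-dimensional.
\end{itemize}
In either case the $(\det T)$-eigenspace of $\sigma(T)$ is a line. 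So $\Lambda_M \cap \Lambda_N$ is either empty or a single point $P$.

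\emph{Step 2: such a point is singular.} Suppose $P$ were a smooth point of $X_{f,g}$. Then $X_{f,g}$ is a threefold smooth at $P$ (codimension $2$ in $\PP^5$), so its embedded tangent space $T_P X_{f,g}$ is a $\PP^3$. Both planes $\Lambda_M$ and $\Lambda_N$ lie in $X_{f,g}$ and pass through $P$, so both are contained in $T_P X_{f,g}$. But two planes in $\PP^5$ meeting in exactly one point span a $\PP^4$, which cannot lie inside a $\PP^3$. This contradiction shows $P$ is a singular point of $X_{f,g}$, which proves Lemma~\ref{lem:2planes}.

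The main point needing care is Step 1: one must rule out the intersection being a line, since a line would span only a $\PP^3$ and the tangent-space argument would then fail. This is handled by the eigenvalue computation, including the Jordan-block case.
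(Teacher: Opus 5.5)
Your proof is correct, but it takes a different route from the paper's.

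\textbf{The paper's route.} It uses Lemma~\ref{lem:first-props}(ii) to reduce to $f=g$. One equivalence is then the identity, and the other is a non-trivial automorphism of $f$ coming from $E[2]$, so it is an involution in $\PGL_2$ and has trace $0$. A direct computation gives the unique intersection point $(-r:p:q:-r:p:q)$, and the paper checks by hand that this is one of the six singular points.

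\textbf{Your route.} You work with an arbitrary pair of non-proportional matrices, with two differences.
\begin{enumerate}
\item Step~1 is an eigenvalue computation for the symmetric square. Your Jordan-block case uses $\operatorname{char} k \neq 2$, which the paper assumes.
\item Step~2 replaces the explicit singularity check with a geometric fact. Two planes in $X_{f,g}$ through a smooth point $P$ both lie in the $\PP^3$ given by $T_P X_{f,g}$ (Jacobian rank $2$, which matches the paper's notion of singular point). So they cannot meet only at $P$.
\end{enumerate}

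\textbf{What each buys.} Your argument does not need to know which matrices actually occur as proper equivalences, and it does not need to verify that a particular point is singular. It is therefore more conceptual and would apply to any planes of this shape lying in a codimension-$2$ quadric intersection. The paper's computation gives the explicit formula~\eqref{singpt} for the intersection point, which links pairs of the planes $\Lambda_i$ to the singular points. That configuration is what is used later to normalise coordinates, for example in the proof of Proposition~\ref{prop:bs}.

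\textbf{A small improvement.} Your hedge ``either empty or a single point'' can be dropped. Since $\det T = \alpha\beta$ is always an eigenvalue of $\sigma(T)$, your Step~1 actually shows that two such planes always meet in exactly one point, matching the paper's conclusion.
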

\begin{proof}
  By Lemma~\ref{lem:first-props}(ii) we may reduce to the case where
  $f=g$. The proper equivalences then give the action of the
  $2$-torsion of the Jacobian.  We may take one to be given by the
  identity matrix, and the other by a matrix
  $(\begin{smallmatrix} p & q \\ r & s \end{smallmatrix})$.  Since the
  latter has order $2$ in $\PGL_2$, it has trace $0$.  A calculation
  then shows that the planes have unique point of intersection
  \begin{equation}
    \label{singpt}
    (u_0:u_1:u_2:v_0:v_1:v_2) = (-r : p : q : -r : p : q)
  \end{equation}
  and that this is a singular point on $X_{f,f} = X_{f,g}$.
\end{proof}

The next lemma shows that it is easy to find a $k$-rational conic on
$X_{f,g}$. By a conic, we shall always mean a smooth conic.

\begin{Lemma}
  \label{lem:getconic}
  Let $F = u_0u_2 - u_1^2 - v_0v_2 + v_1^2$ be the first of the two
  quadrics defining $X_{f,g}$. Then there are two algebraic families
  of planes contained in $\{ F = 0 \}$, each parametrised by $\PP^3$.
  The planes in each family that meet $X_{f,g}$ in a conic are
  parametrised by the complement of $4$ planes in $\PP^3$.
\end{Lemma}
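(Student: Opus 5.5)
The plan is to make the geometry of $\{F=0\}$ explicit. The quadratic form $u_0u_2-u_1^2$ is, up to a scalar, the determinant form on trace-zero $2\times 2$ matrices. Writing $U=\left(\begin{smallmatrix} u_1 & u_2\\ -u_0 & -u_1\end{smallmatrix}\right)$ we have $\det U = u_0u_2-u_1^2$, and similarly for $V$. So $F=\det U-\det V$, a split quadratic form of rank $6$ in $\PP^5$. A nonsingular quadric in $\PP^5$ carries two families of planes, each parametrised by a $\PP^3$ (the spinor varieties of $\mathrm{SO}_6$, via the Klein-type correspondence). I would write them down concretely. For $M\in \GL_2$, up to scalars, consider the plane $\{ V = M U M^{-1}\}$; more generally, for $[M]\in\PP(\mathrm{Mat}_2)\isom \PP^3$ take the plane
\[ \{(U,V) : V\,\mathrm{adj}(M) \cdot \ldots \} \]
defined so that it is $V=MUM^{-1}$ when $\det M\neq 0$. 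Better, a uniform description is: the first family is $\{V M = M U\}$ for $M\in\PP^3$ with this linear condition cutting out a plane (three independent conditions after using trace zero). The second family is $\{VM=-MU\}$, equivalently $V=M U^{*} M^{-1}$ with $U^*$ the adjugate-type involution. One checks directly that $\det V=\det U$ on each plane. Families are distinguished by the sign, and both are parametrised by $\PP(\mathrm{Mat}_2)=\PP^3$. This recovers the planes $\Lambda$ of the earlier displayed equations, since those come from $V=\widetilde{U}$ via Lemma~\ref{lem:first-props}(ii).

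Next, restrict the second quadric $G=Q_f(u)-Q_g(v)$ to a plane $\Pi_M$ in a given family. Parametrising $\Pi_M$ by $u$, we get $G|_{\Pi_M}=Q_f(u)-Q_g(\phi_M(u))$, a ternary quadratic form. Here $\Pi_M\cap X_{f,g}$ is a smooth conic exactly when this form has nonzero determinant, unless $G$ vanishes identically, which happens when $\Pi_M$ lies in $X_{f,g}$. The determinant is a homogeneous polynomial in the entries of $M$, and I expect it to have degree $4$ after removing a power of $\det M$. By Remark~\ref{rem:8planes}, the quadric $\{F=0\}$ contains exactly $8$ planes of $X_{f,g}$. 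Presumably $4$ lie in each family, namely the planes $\Lambda$ from the $4$ proper equivalences, and their images under the involution~\eqref{invol}, which swaps the families. So the degenerate locus should be the set of $M$ for which $\Pi_M$ meets some plane $\Lambda\subset X_{f,g}$ of the other family in a line. In $\PP^3$ this is a plane for each such $\Lambda$, because two planes of opposite families on a $4$-dimensional quadric meet in a point or a line, and the line condition is linear in $M$.

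So the key steps are:
\begin{enumerate}
\item parametrise both families by $\PP^3$ explicitly;
\item show that $\Pi\cap X_{f,g}$ is a singular conic or a whole plane exactly when $\Pi$ meets one of the $4$ planes of $X_{f,g}$ in the opposite family along a line (or equals a plane of its own family);
\item identify that locus with $4$ planes in $\PP^3$, using Lemma~\ref{lem:2planes} to see they are distinct.
\end{enumerate}
For step (ii), one direction is clear: a line common to $\Pi$ and $\Lambda\subset X_{f,g}$ makes the conic contain a line, hence it is degenerate. For the converse, I would use Lemma~\ref{lem:first-props}(ii) to reduce to $f=g$ and compute the determinant of $G|_{\Pi_M}$ directly. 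I expect it to factor as $c\prod_{i=1}^4 \ell_i(M)$ with $\ell_i$ linear forms corresponding to the $2$-torsion matrices (trace-zero representatives, plus the identity).

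The main obstacle is step (ii)'s converse, i.e. verifying that the discriminant factors as a product of exactly $4$ linear forms, with no extra component. I would first try to settle it by the explicit computation in the case $f=g$ with $f$ in a normal form such as $x^4+cx^2z^2+z^4$. Its $2$-torsion action is by the diagonal and antidiagonal matrices, which should make the factorisation transparent. I would then conclude by degree counting: the discriminant has degree $4$ in $M$ and vanishes on the $4$ distinct planes, so it is their product.
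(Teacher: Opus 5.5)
Your strategy matches the paper's. The paper writes $F$ as a Pfaffian and parametrises one family by $\Pi_x=\{x\wedge y\}$. It then checks that an explicit $4\times4$ matrix (the restriction of $G$, with $x$ in its kernel) has rank $3$ exactly off $\{x_1x_2x_3x_4=0\}$. Your $\mathfrak{sl}_2\times\mathfrak{sl}_2$ model is correct. So is your reading of the four bad planes in $\PP^3$: they are where $\Pi_M$ meets one of the planes $\iota(\Lambda_i)$ in a line. Writing $\Lambda_i=\{V=N_iUN_i^{-1}\}$, these are the hyperplanes $\mathrm{tr}(N_i^{-1}M)=0$, an explanation the paper leaves implicit. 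One slip: planes from opposite families are disjoint or meet in a line. It is planes from the same family that meet in a point or coincide.

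The genuine gap is in the computation you rely on. You parametrise $\Pi_M$ by $u$ as the graph of $U\mapsto MUM^{-1}$, and this exists only when $\det M\neq0$. For $M=ab^{T}$ of rank one, $\Pi_M=\{(U,V): Va=ca,\ b^{T}U=cb^{T}\}$ projects onto a $2$-dimensional subspace of $U$-space, so it is not a graph over $u$. After clearing denominators, your determinant is therefore a power of $\det M$ times a quartic. It tells you nothing about the conics $\Pi_M\cap X_{f,g}$ with $M$ on the quadric surface $\{\det M=0\}\subset\PP^3$.

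Your step ``remove a power of $\det M$, then a quartic vanishing on four distinct planes is their product'' presupposes that the degeneracy locus is cut out by a global quartic on $\PP^3$. Your chart does not establish this. A priori the closed set of bad $M$ could contain all or part of $\{\det M=0\}$ besides the four planes, and the lemma asserts the exact complement.

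To close this, parametrise $\Pi_M$ uniformly for every $M\neq0$. One way is the map $N\mapsto\bigl(\mathrm{adj}(M)N-\mathrm{adj}(N)M,\ N\,\mathrm{adj}(M)-M\,\mathrm{adj}(N)\bigr)$ from $\mathrm{Mat}_2/\langle M\rangle$ onto $\Pi_M$. Both components are trace-zero, satisfy $VM=MU$, and the kernel is exactly $\langle M\rangle$. Then $G$ pulls back to a $4\times4$ symmetric matrix, quadratic in $M$, with $M$ in its kernel. $\Pi_M\cap X_{f,g}$ is a smooth conic iff this matrix has rank $3$, which is precisely the paper's computation with $\Phi(x,\cdot)$. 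Alternatively, the torus in $\mathrm{SO}(F)$ stabilising $X_{f,g}$ acts on $\PP^3$ with the complement of the four planes as a single orbit, so checking one point suffices.
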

\begin{proof}
  We change coordinates over $k$ so that
  $F = z_{12} z_{34} - z_{13} z_{24} + z_{14} z_{23}$, this being the
  formula for the Pfaffian of a $4 \times 4$ alternating matrix
  $Z$. The two algebraic families are the orbits of the planes
  $\{ z_{12} = z_{13} = z_{14} = 0\}$ and
  $\{z_{23} = z_{24} = z_{34} = 0\}$ under the action of $\GL_4$ via
  $A: Z \mapsto A Z A^T$. We may describe one of these families as
  follows.  Let $\Phi : \PP^3 \times \PP^3 \,\, - \to \, \PP^5$ be the
  rational map given by
  \begin{align*}
    ((x_1:x_2:x_3:&x_4),(y_1:y_2:y_3:y_4)) \\ &\mapsto (z_{12} :
    \ldots : z_{34} ) = (x_1 y_2 - x_2 y_1 : \ldots : x_3 y_4 - x_4 y_3).
  \end{align*}
  Then for each point $x \in \PP^3$ the image of $y \mapsto \Phi(x,y)$
  is a plane $\Pi_x \subset \{F = 0\}$.

  As suggested by Remark~\ref{rem:8planes} we now change coordinates
  over $\kbar$ so that $X_{f,g}$ is defined by
  $F = z_{12} z_{34} - z_{13} z_{24} + z_{14} z_{23}$ and
  $G = \lambda z_{12} z_{34} + \mu z_{13} z_{24}$ where
  $\lambda,\mu \in \kbar$ with $\lambda,\mu,\lambda+\mu \not= 0$.
  After substituting $z_{ij} = x_i y_j - x_j y_i$ into $G$, its matrix
  of second partial derivatives (with respect to $y_1, \ldots, y_4$) is
  \[ \begin{pmatrix}
      0 & \mu x_3 x_4 & \lambda x_2 x_4  &-(\lambda + \mu) x_2 x_3 \\
      \mu x_3 x_4 & 0 &-(\lambda+ \mu) x_1 x_4 & \lambda x_1 x_3 \\
      \lambda x_2 x_4 &-(\lambda + \mu) x_1 x_4 & 0 & \mu x_1 x_2 \\
      -(\lambda + \mu) x_2 x_3 & \lambda x_1 x_3 & \mu x_1 x_2 & 0
   \end{pmatrix}. \]
 The plane $\Pi_x$ meets $X_{f,g}$ in a conic if and only if this
 matrix has rank $3$. This is the case for all
 $x \in \PP^3 \setminus \{ x_1 x_2 x_3 x_4 = 0 \}$.
\end{proof}

\section{The Hasse principle}
\label{sec:hp}

We take $k$ a number field, and show that the quadric
intersections $X_{f,g}$ in Definition~\ref{def:Xfg} satisfy the smooth
Hasse principle. (The Hasse principle itself is not true.
See Example~\ref{ex:hpfails} for a counterexample.)

\begin{Theorem}
  \label{thm:hp}
  If $X_{f,g}$ has a smooth $k_v$-point for all places $v$ then it has
  a smooth $k$-point.
\end{Theorem}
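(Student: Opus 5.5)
We aim to reduce the smooth Hasse principle for $X = X_{f,g}$ to the Hasse principle for quadratic forms, using the conics supplied by Lemma~\ref{lem:getconic}. Lemma~\ref{lem:getconic} is stated over $\kbar$, but the families of planes are defined over $k$ once we know which family is which. The first step is therefore to check that the two rulings of the rank $6$ quadric $\{F=0\}$ are each defined over $k$. Here $F = u_0u_2-u_1^2-v_0v_2+v_1^2$ has square discriminant, being hyperbolic over $k$; this is why we could write it as the Pfaffian $z_{12}z_{34}-z_{13}z_{24}+z_{14}z_{23}$ over $k$. So for each $x \in \PP^3(k)$ we get a $k$-rational plane $\Pi_x \subset \{F=0\}$. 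Because the four excluded planes in $\PP^3$ form a proper closed subset, we may choose $x$ so that $C_x = \Pi_x \cap X$ is a smooth conic defined over $k$. The same is true over each completion $k_v$.

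If $C_x$ has a $k$-point, then since it is a smooth conic it has infinitely many $k$-points. Only finitely many of these can be among the $6$ singular points of $X$, so we obtain a smooth $k$-point of $X$. It therefore suffices to find $x \in \PP^3(k)$, outside the bad locus, such that $C_x(k_v) \neq \emptyset$ for every $v$. By Hasse--Minkowski, a conic has a $k$-point as soon as it has $k_v$-points everywhere. The key local claim is the following: if $X$ has a smooth $k_v$-point $P$, then there is a nonempty $v$-adically open set $U_v \subset \PP^3(k_v)$ such that $C_x(k_v) \neq \emptyset$ for all $x \in U_v$.

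To prove the local claim, we note that $P$ lies on the smooth quadric $\{F=0\}$. The planes of one ruling through $P$ form a $\PP^1 \subset \PP^3$, namely the $x$ with $P \in \Pi_x$. Explicitly, $P$ corresponds to a decomposable $2$-form $a\wedge b$, and $P \in \Pi_x$ exactly when $x \in \langle a,b\rangle$; this $\PP^1$ is $k_v$-rational. A generic $k_v$-point $x_0$ on this line avoids the four bad planes, so $C_{x_0}$ is a smooth conic through $P$. Hence $C_{x_0}$ has a smooth $k_v$-point at which the conic is locally isotropic. Isotropy of a nondegenerate ternary form, equivalently the triviality of its Hilbert symbol, is an open condition in the coefficients, because smooth $k_v$-points deform under the implicit function theorem. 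Thus all $x$ near $x_0$ also give conics with $k_v$-points.

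We then choose $x \in \PP^3(k)$ by weak approximation, lying in $U_v$ for every $v$ in a finite set $S$ and avoiding the four bad planes. The main obstacle is the infinitely many places outside $S$: we need $C_x(k_v)\neq\emptyset$ for all $v \notin S$ as well. For this we take $S$ to contain the archimedean places, the primes dividing $2$, and all primes of bad reduction for the pair $(F,G)$, including those dividing $4I^3-J^2$. We must then argue that for $v \notin S$ the conic $C_x$ is automatically $k_v$-soluble. This follows if, after clearing denominators in $x$, the conic reduces mod $v$ to something with a smooth $\F_v$-point, and Chevalley--Warning together with Hensel's lemma then give the $k_v$-point. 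The difficulty is that $x$ may reduce into one of the bad planes mod $v$, where the conic degenerates.

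To handle those places I would instead use the reciprocity law $\sum_v \operatorname{inv}_v(C_x) = 0$. The Hilbert symbol of $C_x$ can fail to be trivial only at $v \in S$ and at primes where $x$ meets the bad planes. I would control the latter by choosing $x$ so that the four linear forms defining the bad planes take values that are units outside $S$, apart from one auxiliary prime $w$. This is a Dirichlet-type choice, in the style of the fibration method, and reciprocity then forces triviality at $w$ as well. If this bookkeeping proves awkward, the fallback is the Brauer--Manin / descent formalism for quadric intersections in $\PP^5$ with singular pencil (Colliot-Th\'el\`ene--Sansuc--Swinnerton-Dyer), after checking that the relevant Brauer group is trivial using the structure of the $8$ planes from Remark~\ref{rem:8planes} and Lemma~\ref{lem:2planes}.
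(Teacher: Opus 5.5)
Your reduction to finding an everywhere locally soluble conic $C_x$ is sound. The gap is exactly where you locate the main obstacle: solubility of $C_x$ at the infinitely many places outside $S$. What you give there is a plan, not an argument.

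The four linear forms cutting out the bad planes $D_1,\dots,D_4\subset\PP^3$ are only jointly defined over $k$. Your ``Dirichlet-type choice'' therefore asks for $x$ in the open sets $U_v$ ($v\in S$) such that a norm form $N_{L/k}(\ell(x))$, attached to a quartic \'etale algebra $L$, is an $S$-unit times a single prime. That needs a Chebotarev argument in a ray class field of $L$, which you do not supply. The fallback is not available either: as remarked just after the statement, when $E(k)[2]=0$ the variety $X_{f,g}$ is of type (F). This case is excluded from \cite[Theorem 3.20]{CSS}, and it is not known whether the Brauer--Manin obstruction is trivial for such quadric intersections.

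There is also a slip in your local claim. If $P$ lies on one of the planes $\{x\wedge y : x,y\in D_i\}$, which are contained in $X_{f,g}$, then the whole line $\langle a,b\rangle$ lies in $D_i$. In that case every plane of your ruling through $P$ is bad. You must first move $P$, using that smooth $k_v$-points are Zariski dense.

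The missing idea, which removes all of this bookkeeping, is that $[C_x]\in\Br(k)$ does not depend on $x$. In the coordinates of Lemma~\ref{lem:getconic}, where $D_i=\{x_i=0\}$, if $x_1=y_1=0$ then $z_{12}=z_{13}=z_{14}=0$, so $x\wedge y\in X_{f,g}$. Hence for $x$ on a bad plane $D_i$, the degenerate section $\Pi_x\cap X_{f,g}$ is the line $\{x\wedge y : y\in D_i\}$ together with a residual line. Both lines are defined over the function field $k(D)$ of the corresponding $k$-irreducible component $D$ of the bad locus. By Hensel's lemma the generic conic then has a point over the completion of $k(\PP^3)$ at $D$. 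So its class $\alpha\in\Br(k(\PP^3))$ has zero residue along every divisor, and by purity $\alpha\in\Br(\PP^3_k)=\Br(k)$. Specialising, $[C_x]=\alpha\otimes K$ for every $x\in\PP^3(K)$ off the bad planes, for any field $K\supset k$.

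Your corrected local claim, applied to a single $x_0$ at each place, now shows that $\alpha$ is locally trivial everywhere. So every $k$-rational $x$ off the bad planes gives a conic with a $k$-point, and no set $S$, weak approximation or Dirichlet argument is needed.

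Completed this way, your route genuinely differs from the paper's. The paper uses one conic $C=X_{f,g}\cap\Pi$ only to supply local points on hyperplane sections $X_{f,g}\cap H\supset C$ at places where $C$ is soluble. It handles the finitely many remaining places by weak approximation on the hyperplanes through $\Pi$. The global point then comes from the Hasse principle for the quartic del Pezzo surface $X_{f,g}\cap H$, by blowing down four conjugate disjoint lines to a degree~$8$ del Pezzo surface, or alternatively from the Brauer--Severi model of Proposition~\ref{prop:bs}.
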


We know by Lemma~\ref{lem:getconic} that $X_{f,g}$ contains a
$k$-rational conic.  So if $X_{f,g}$ were smooth then it would satisfy
the Hasse principle by a theorem of Salberger \cite{Sal}. We initially
tried to adapt his proof, to allow for the fact that $X_{f,g}$ is
singular. However, we found that a simpler argument is possible in our
situation.
In particular, we appeal to standard facts about del Pezzo surfaces,
instead of Salberger's results on conic bundle surfaces.

\begin{Remark}
  If $E_{I,J}$ has no $k$-rational $2$-torsion points
  then~\eqref{cubic-squared} shows that $X_{f,g}$ is a quadric
  intersection of type (F), as defined by Colliot Th\'el\`ene, Sansuc
  and Swinnerton-Dyer.  This case is specifically excluded from
  \cite[Theorem 3.20]{CSS}.  According to \cite[Remark 3.20.1]{CSS} it
  is not known in general whether quadric intersections of this type
  have trivial Brauer-Manin obstruction.
\end{Remark}

We have found two different proofs of Theorem~\ref{thm:hp}.  We give
details of both, since the first motivates the second.  Readers
uninterested in this motivation may %at this point
skip to the start of the next section.

We start our first proof of Theorem~\ref{thm:hp} by showing that every
smooth hyperplane section of $X_{f,g}$ satisfies the Hasse principle.

\begin{Lemma}
  \label{lem:slice}
  Let $H \subset \PP^5$ be a $k$-rational hyperplane. If
  $X_{f,g} \cap H$ is a smooth surface then it satisfies the Hasse
  principle.
\end{Lemma}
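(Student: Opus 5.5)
The plan is to show that a smooth hyperplane section $S = X_{f,g} \cap H$ can be blown down over $k$ to a del Pezzo surface of degree $8$, and that such surfaces satisfy the Hasse principle. First, $S$ is a smooth complete intersection of two quadrics in $H \isom \PP^4$, so it is a del Pezzo surface of degree $4$. Since $S$ is smooth, $H$ cannot pass through any of the $6$ singular points of $X_{f,g}$: at such a point $P$ both quadrics and $H$ would have dependent differentials, so $S$ would be singular at $P$.

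Next I use the planes from Section~\ref{sec:qi}. To each of the $4$ proper $\kbar$-equivalences between $f$ and $g$ there corresponds a plane $\Lambda \subset X_{f,g}$. Since $\Lambda \subset X_{f,g}$ and $H \cap X_{f,g} = S$ is a surface, $H$ does not contain $\Lambda$, so $\ell = \Lambda \cap H$ is a line on $S$. This gives $4$ lines $\ell_1, \ldots, \ell_4 \subset S$. By Lemma~\ref{lem:2planes}, two distinct planes of this family meet only at singular points of $X_{f,g}$. These points are not on $H$, so the lines $\ell_i$ are pairwise disjoint. The set of $4$ proper equivalences is stable under $\Gal(\kbar/k)$, because $f$ and $g$ are defined over $k$ and the planes are given by explicit equations in $p,q,r,s$. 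Hence $\{\ell_1,\ldots,\ell_4\}$ is a Galois-stable set of pairwise skew $(-1)$-curves on $S$. By Castelnuovo's contractibility criterion, applied over $k$ to the Galois orbits, they can be contracted simultaneously over $k$. This gives a birational $k$-morphism $\phi : S \to Y$ with $Y$ a smooth del Pezzo surface of degree $4 + 4 = 8$.

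Now I use standard facts about degree $8$ del Pezzo surfaces. The first point to check is that $Y$ is a form of $\PP^1 \times \PP^1$ rather than $\PP^2$ blown up in a point. If it were the latter, the unique $(-1)$-curve on $Y$ would be defined over $k$, and contracting it would give a Severi--Brauer surface. In either case I expect $Y$ to satisfy the Hasse principle:
\begin{itemize}
\item A Severi--Brauer surface satisfies the Hasse principle by Albert--Brauer--Hasse--Noether.
\item A form of $\PP^1 \times \PP^1$ embeds anticanonically, via $-\tfrac12 K_Y$ when this class is defined over $k$, as a quadric surface in $\PP^3$, and quadrics satisfy the Hasse principle.
\item If $-\tfrac12 K_Y$ is not defined over $k$, then $Y$ is $R_{L/k}$ of a conic over a quadratic extension $L/k$, and the Hasse principle for conics over $L$ applies.
\end{itemize}
Making this case analysis watertight, in particular the Weil restriction case and the descent of $-\tfrac12 K_Y$, is the step I expect to need the most care. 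I would handle it by checking whether the Brauer obstruction to $-\tfrac12 K_Y$ being defined over $k$ can be nontrivial only when the two rulings are swapped by Galois.

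Finally I transfer points between $S$ and $Y$. If $S(k_v) \neq \emptyset$ for all $v$, then $Y(k_v) \neq \emptyset$ via $\phi$, so $Y(k) \neq \emptyset$. A smooth degree $8$ del Pezzo surface with a $k$-point is $k$-rational, by projecting from the point in the quadric case. Hence $Y(k)$ is Zariski dense, and I can choose a $k$-point of $Y$ outside the finite set $\phi(\ell_1 \cup \cdots \cup \ell_4)$. Its preimage is a $k$-point of $S$.
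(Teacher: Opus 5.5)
Your proposal is the paper's proof step for step. $S$ is a quartic del Pezzo surface, and the planes $\Lambda_1,\ldots,\Lambda_4$ cut out four Galois-stable lines, which are disjoint by Lemma~\ref{lem:2planes} because $H$ misses the singular points. Contracting them gives a degree~$8$ del Pezzo surface, for which the paper simply quotes the Hasse principle as well known. Your transfer of points from $Y$ back to $S$ is a detail the paper leaves implicit.

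The one step that would fail is the check you propose inside your degree~$8$ case analysis. It is not true that $-\tfrac12 K_Y$ can fail to descend only when Galois swaps the rulings. Take $Y = C_1 \times C_2$ for conics with $[C_1] \neq [C_2]$ in $\Br(k)$, for instance $\PP^1 \times C$ with $C(k)=\emptyset$. Here the rulings are Galois-stable, yet the obstruction $[C_1]+[C_2]$ to $-\tfrac12 K_Y$ is nonzero. Such a $Y$ is also not $R_{L/k}$ of a conic over a quadratic field, so your third bullet misses this case.

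This is harmless. Such a product satisfies the Hasse principle because each factor does. More uniformly, every form of $\PP^1\times\PP^1$ is $R_{L/k}(C)$ for a quadratic \'etale algebra $L$ and a conic $C$ over $L$; when $L = k\times k$ this is just $C_1\times C_2$. Then $Y(k_v) = \prod_{w \mid v} C(L_w)$ reduces everything to conics over number fields.

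A smaller point: ``$S$ is a surface'' does not by itself exclude $\Lambda_i \subset H$. Either use that the smooth complete intersection $S$ is irreducible of degree~$4$, so it cannot contain a plane. Or note that $\Lambda_i$ contains the singular point $\Lambda_i \cap \Lambda_j$ of $X_{f,g}$, which you have already shown does not lie on $H$.
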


\begin{proof}
  We are given that $Y = X_{f,g} \cap H$ is a smooth intersection of
  two quadrics in $\PP^4$. It is therefore a del Pezzo surface of
  degree $4$. As explained in Section~\ref{sec:qi} the proper
  equivalences between $f$ and $g$ determine planes
  $\Lambda_1, \ldots, \Lambda_4 \subset X_{f,g}$. We cannot have
  $\Lambda_i \subset H$ as this would contradict that $Y$ is
  smooth. Intersecting with $H$ we therefore get lines
  $\ell_1, \ldots, \ell_4 \subset Y$.  These lines are jointly defined
  over $k$, by which we mean that they are permuted by the action of
  $\Gal(\kbar/k)$.  By Lemma~\ref{lem:2planes} they are disjoint.  We
  may therefore blow them down to give a del Pezzo surface of
  degree~$8$. It is well known that del Pezzo surfaces of degree~$8$
  satisfy the Hasse principle. This completes the proof.
\end{proof}

\begin{Remark} Applying the involution~\eqref{invol} to the planes
  $\Lambda_1, \dots, \Lambda_4$ gives a further 4 planes on $X_{f,g}$
  and hence a further $4$ lines $\ell'_1, \ldots, \ell'_4$ on the del
  Pezzo surface $Y$.  It may be checked using Remark~\ref{rem:8planes}
  that $\ell_i$ meets $\ell'_j$ if and only if $i \not= j$.  The two
  sets of four lines therefore constitute a ``double-four'' in the
  terminology of Swinnerton-Dyer \cite[Section 7]{SD}.  It follows
  that the del Pezzo surface of degree~$8$ (considered at the end of
  the last proof) has no $-1$-curves, and so is isomorphic over
  $\kbar$ to $\PP^1 \times \PP^1$ (instead of $\PP^2$ blown up at one
  point).
\end{Remark}

We could use Bertini's theorem to choose the hyperplane $H$ in
Lemma~\ref{lem:slice}. However the following more explicit lemma
simplifies the next steps.

\begin{Lemma}
  \label{lem:make-smooth}
  Let $\Pi \subset \{ u_0 u_2 - u_1^2 = v_0 v_2 - v_2^2 \} \subset \PP^5$
  be a $k$-rational plane. Let $H \subset \PP^5$ be a $k$-rational
  hyperplane that contains $\Pi$, but does not contain any of the
  singular points on $X_{f,g}$.  Then the surface $X_{f,g} \cap H$
  is either smooth or has a $k$-rational singular point.
\end{Lemma}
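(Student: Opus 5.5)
The plan is to study the pencil of quadrics that cuts out $Y = X_{f,g} \cap H$ in $H \isom \PP^4$, and to show that every singular point of $Y$ is forced onto a $k$-rational member of the pencil with a $0$-dimensional vertex. Write $F,G$ for the two quadrics defining $X_{f,g}$, and $\bar F, \bar G$ for their restrictions to $H$.

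\textbf{Step 1 (where singular points come from).} Exactly as in the proof of Lemma~\ref{lem:qi-smooth}, a singular point $P$ of $Y$ is a singular point of some quadric $\lambda \bar F + \mu \bar G$ in the restricted pencil. Hence it gives a root of the binary quintic $d(\lambda,\mu) = \det(\lambda \bar F + \mu \bar G)$, and that root is a repeated root. (If $Y$ were not of codimension $2$, a small separate argument is needed; I expect this to be excluded by the smoothness of the quadrics discussed below.)

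\textbf{Step 2 (rank bounds).} Restricting a quadratic form to a hyperplane lowers its rank by at most $2$.
\begin{enumerate}
\item By~\eqref{cubic-squared}, every member of the original pencil other than the three roots of $c(x) = x^3 - 27Ix - 27J$ has rank $6$. Its restriction to $H$ therefore has rank at least $4$, so corank at most $1$.
\item Now take one of the three rank-$4$ members. Its vertex is a line $L$, and $L$ meets $X_{f,g}$ in two singular points. Since $H$ contains no singular point of $X_{f,g}$, we have $L \not\subset H$, and $H$ meets $L$ in a single point not on $Y$. So the restriction has rank exactly $4$, with vertex a point not on $Y$. I then check (for instance in the diagonal coordinates of Remark~\ref{rem:8planes}) that such a member gives a simple root of $d$. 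In any case it contributes no singular point of $Y$.
\item Separately, $\bar F$ vanishes on the plane $\Pi \subset H$. A smooth quadric in $\PP^4$ contains no plane, so $\bar F$ is singular, and $(1:0)$ is a root of $d$. Also $(1:0)$ is not a root of $c$, since $F$ has rank $6$.
\end{enumerate}

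\textbf{Step 3 (factorisation).} Combining Step 2, we get $d(\lambda,\mu) = c^{*}(\lambda,\mu)\, q(\lambda,\mu)$, where $c^{*}$ is the binary cubic corresponding to $c$ and $q$ is a binary quadratic form over $k$ with the $k$-rational root $(1:0)$. The second root of $q$ is then also $k$-rational. Any repeated root of $d$ must come from $q$, because the roots of $c^{*}$ are simple. Hence the member $\lambda_0 \bar F + \mu_0 \bar G$ attached to any singular point of $Y$ is defined over $k$. By Step 2(i) it has corank exactly $1$, so its vertex is a single $k$-rational point $P_0$. Every singular point of $Y$ coming from this member equals $P_0$. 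At most two members are involved (the two roots of $q$), each contributing at most one point, and each of those points is $k$-rational. So if $Y$ is singular, it has a $k$-rational singular point.

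\textbf{Main obstacle.} The delicate step is Step 2(ii): confirming that the restrictions of the three rank-$4$ quadrics give \emph{simple} roots of $d$. This is needed so that the cubic factor cannot absorb a repeated root, which could otherwise produce a Galois-conjugate pair of singular points. I would first try the argument from the proof of Lemma~\ref{lem:qi-smooth}, namely that a corank-one member whose vertex is not on $Y$ gives a simple root. Failing that, I would do a direct computation in the split coordinates $\{x_1x_2 = x_3x_4 = x_5x_6\}$ over $\kbar$, using only that $H$ avoids the six singular points.
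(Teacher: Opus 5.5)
Your proof is correct and is essentially the paper's argument: the restrictions of $F$ and of the three rank-$4$ quadrics give four distinct roots of the binary quintic, so the remaining root is $k$-rational, and any singular point of $X_{f,g}\cap H$ is the vertex of a $k$-rational member of rank $4$. Your ``main obstacle'' is settled by the criterion in the proof of Lemma~\ref{lem:qi-smooth}: the $t$-coefficient of $\det(\bar G_i + t\bar F)$ is a nonzero multiple of $F$ evaluated at the vertex $H\cap L$, and this value is nonzero because that point lies on $G_i$ but not on $X_{f,g}$. This also shows the quintic is never identically zero, whereas the paper sidesteps simplicity (a repeated root at some $G_i$ would equal the $k$-rational remaining root) and treats the identically-zero case separately.
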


\begin{proof}
  The pencil of quadrics defining $X_{f,g}$ contains
  $F = u_0 u_2 - u_1^2 - v_0 v_2 + v_2^2$ of rank 6, and three
  singular quadrics $G_1,G_2,G_3$ each of rank 4. We claim that on
  restricting to the hyperplane $H$, each of these quadrics has rank
  4. Indeed the restriction of $F$ has rank at most $4$ since
  $\Pi \subset \{F = 0\}$, and rank at least $4$ since the rank can
  drop by at most $2$.  If the rank of $G_i$ dropped then $H$ would
  have to contain the singular locus of $G_i$. However this singular
  locus is a line meeting $X_{f,g}$ in two singular points.  The claim
  follows since we assumed that $H$ does not contain any of the
  singular points on $X_{f,g}$.
  
  The singular quadrics in the pencil defining $X_{f,g} \cap H$
  correspond to the roots of a binary quintic form. We may account for
  $4$ of the roots as corresponding to the restrictions of
  $F,G_1,G_2,G_3$. If any of these are repeated roots then the
  corresponding rank $4$ quadric is defined over $k$, and its singular
  point is a $k$-rational singular point on $X_{f,g} \cap H$.  The
  same argument applies when the binary quintic form is identically
  zero (a case also covered by \cite[Lemma 1.14]{CSS}), provided we
  select the restriction of $F$ as our rank $4$ quadric, the problem
  with $G_1,G_2,G_3$ being that these might not be defined over $k$.
  In the remaining case the binary quintic form has distinct roots, in
  which case $X_{f,g} \cap H$ is smooth by Lemma~\ref{lem:qi-smooth}.
\end{proof}

The following lemma will help us choose the hyperplane $H$ in
Lemma~\ref{lem:make-smooth} so that $X_{f,g} \cap H$ is everywhere
locally soluble.

\begin{Lemma}
\label{lem:prepare}
Let $X = \{F = G = 0\} \subset \PP^5$ be a quadric
intersection. Suppose that $\rank F \geqslant 5$ and
$\Pi \subset \{F = 0\} \subset \PP^5$ is a plane. Then for every
$P \in X_{\rm sm} \setminus \Pi$ there is a hyperplane $H$ containing
both $\Pi$ and $P$ such that $P$ is a smooth point on the intersection
$X \cap H$.
\end{Lemma}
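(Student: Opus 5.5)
The plan is a linear-algebra argument in the space of hyperplanes through $\Pi$ and $P$, reducing the statement to a bound on the dimension of linear subspaces contained in the quadric $\{F = 0\}$. Work with the underlying $6$-dimensional vector space $V_6$, and write $\hat\Pi \subset V_6$ for the $3$-dimensional subspace over $\Pi$. Let $p \in V_6$ be a vector representing $P$, and let $B_F, B_G$ be the symmetric bilinear forms attached to $F, G$.

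\textbf{Smoothness in terms of linear forms.} Since $P$ is a smooth point of $X$, the differentials $dF_P = B_F(p,\cdot)$ and $dG_P = B_G(p,\cdot)$ are linearly independent linear forms. They span a $2$-dimensional subspace $D \subset V_6^*$. For a hyperplane $H = \{h = 0\}$ through $P$, the surface $X \cap H$ is cut out in $H$ by $F|_H$ and $G|_H$. By the Jacobian criterion, $P$ is a smooth point of $X \cap H$ exactly when $h, dF_P, dG_P$ are linearly independent, i.e.\ when $h \notin D$.

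\textbf{The candidate hyperplanes.} The hyperplanes containing both $\Pi$ and $P$ correspond to the nonzero $h$ vanishing on $W_4 = \hat\Pi + \langle p \rangle$. Since $P \notin \Pi$, this subspace is $4$-dimensional, so its annihilator $W = W_4^\perp \subset V_6^*$ is $2$-dimensional. We need some $h \in W \setminus D$. As $W$ and $D$ are both $2$-dimensional, such an $h$ fails to exist only if $W \subseteq D$, which forces $W = D$. So it suffices to rule out $D = W$.

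\textbf{Ruling out $D = W$.} If $D = W$, then in particular $dF_P \in W$, so $B_F(p, x) = 0$ for all $x \in \hat\Pi$. Now expand $F$ on $W_4$: for $x \in \hat\Pi$,
\[ F(x + tp) = F(x) + 2t\,B_F(x,p) + t^2 F(p). \]
Here $F(x) = 0$ because $\Pi \subset \{F = 0\}$, $B_F(x,p) = 0$ by the previous sentence, and $F(p) = 0$ because $P \in X$. Hence $F$ vanishes identically on the $4$-dimensional subspace $W_4$, i.e.\ $\{F = 0\}$ contains a $3$-plane.

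\textbf{The isotropic-dimension bound.} The remaining step is the standard bound: a quadratic form of rank $r$ on a space of dimension $n$ has totally isotropic subspaces of dimension at most $(n - r) + \lfloor r/2 \rfloor$. Indeed, modulo the radical such a subspace becomes totally isotropic for a nondegenerate form of rank $r$, so its image there has dimension at most $\lfloor r/2 \rfloor$. With $n = 6$ and $r \geqslant 5$ this gives at most $3$, contradicting $\dim W_4 = 4$. So $D \neq W$, and any $h \in W \setminus D$ defines the required hyperplane $H$.

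I do not expect a real obstacle. The only points needing care are checking that smoothness of $P$ on $X \cap H$ is exactly the condition $h \notin D$, and stating the isotropic-dimension bound correctly. Note that the hypothesis on $G$ is not used beyond $G(p) = 0$ and the independence of $dF_P, dG_P$.
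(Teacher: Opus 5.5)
Your proposal is correct and follows essentially the same route as the paper, only without coordinates. The paper puts $\Pi=\{(*:*:*:0:0:0)\}$ and $P=(0:0:0:1:0:0)$, notes that $H=\{\lambda x_5+\mu x_6=0\}$ works unless $T_PX=\{x_5=x_6=0\}$ (your case $D=W$), and in that case concludes that $F$ vanishes on the span of $\Pi$ and $P$, whence $\rank F\leqslant 4$; your expansion of $F(x+tp)$ and the isotropic-dimension bound spell out those two steps in detail.
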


\begin{proof}
  We change coordinates so that $\Pi = \{ (*:*:*:0:0:0) \}$ and
  $P = (0:0:0:1:0:0)$. A hyperplane of the required form is given by
  $H = \{\lambda x_5 + \mu x_6 = 0\}$, provided that the linear form
  $\lambda x_5 + \mu x_6$ does not vanish on $T_PX$. So we are done
  unless $T_P X = \{x_5 = x_6 = 0\}$. However in that case the first
  $4$ partial derivatives of $F$ vanish at $P$. This implies that $F$
  vanishes on the linear span of $\Pi$ and $P$, and so
  $\rank F \leqslant 4$. This contradicts our assumption that
  $\rank F \geqslant 5$.
\end{proof}

The remainder of the proof is based on arguments
in~\cite[Section~2]{Sal}.

\medskip

\begin{ProofOf}{Theorem~\ref{thm:hp}}
  Let $X_{f,g} \subset \PP^5$ be as given in Definition~\ref{def:Xfg}.
  By Lemma~\ref{lem:getconic} we may pick a $k$-rational plane
  $\Pi \subset \{ u_0 u_2 - u_1^2 = v_0 v_2 - v_2^2 \}$ such that
  $C = X_{f,g} \cap \Pi$ is a conic.  Let $S$ be the finite set of
  places $v$ with $C(k_v) = \emptyset$.
  
  For each place $v \in S$, we use Lemma~\ref{lem:prepare}, and our
  assumption that $X_{f,g}$ has a smooth $k_v$-point, to pick a
  $k_v$-rational hyperplane $H_v$ containing $\Pi$ and meeting
  $X_{f,g}$ transversely at a $k_v$-point.  The implicit function
  theorem (specifically \cite[Proposition 6.2]{CSS}) shows that there
  is a $v$-adic neighbourhood $U_v$ of $H_v$ in $(\PP^5)^\vee(k_v)$
  such that every hyperplane in $U_v$ meets $X_{f,g}$ transversely at
  a $k_v$-point.  Since $C$ is smooth, none of the singular points of
  $X_{f,g}$ are contained in $\Pi$. The hyperplanes containing $\Pi$,
  but not containing any of the $6$ singular points on $X_{f,g}$, form
  a Zariski open subset $W \subset \PP^2$ (the complement of at most
  $6$ lines).  By weak approximation we may pick a hyperplane $H$ in
  $W(k)$ that belongs to $U_v$ for each $v \in S$. We claim that
  $(X_{f,g} \cap H)(k_v) \not= \emptyset$ for all places $v$.  Indeed,
  this is true for places $v \not\in S$ since
  $C \subset X_{f,g} \cap H$ and it is true for places $v \in S$ since
  $H \in U_v$.

  We know by Lemma~\ref{lem:make-smooth} that $X_{f,g} \cap H$ is
  either smooth or has a $k$-point. It follows by
  Lemma~\ref{lem:slice} that $X_{f,g} \cap H$ has a $k$-point. By our
  choice of $H$, this is not a singular point on $X_{f,g}$.
\end{ProofOf}

\begin{Remark}
\label{rem:density}
The conclusion of Theorem~\ref{thm:hp} may be strengthened to say that
the $k$-points on $X_{f,g}$ are Zariski dense.  Indeed, in
Lemma~\ref{lem:slice} the corresponding strengthening follows from
standard facts about del Pezzo surfaces, and in the proof just given
there are clearly infinitely many choices for the hyperplane $H$ that
satisfy the required hypotheses.
\end{Remark}

\section{An alternative proof}
\label{sec:bs}

In the previous section we proved that the quadratic intersections
$X_{f,g}$ satisfy the smooth Hasse principle. Our proof worked by
considering hyperplane sections which were del Pezzo surfaces. However
it turns out that a more direct approach is possible.

\begin{Proposition}
  \label{prop:bs}
  Let $X = X_{f,g}$ be as in Definition~\ref{def:Xfg}. Let
  $\Lambda_1, \ldots, \Lambda_4$ be the planes contained in $X$
  corresponding to the proper equivalences between $f$ and $g$, and
  let $H$ be the hyperplane section. Then the linear system
  $|3 H - \Lambda_1 - \Lambda_2 - \Lambda_3 - \Lambda_4|$ defines a
  birational map $\phi : X \, - \!  \to Y$ where $Y \subset \PP^9$ is
  a Brauer-Severi threefold. Moreover, the birational map $\phi$ is
  regular at all smooth points of $X$.
\end{Proposition}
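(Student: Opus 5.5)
Since every assertion in Proposition~\ref{prop:bs} is geometric, and the linear system $|3H - \Lambda_1 - \cdots - \Lambda_4|$ is defined over $k$ (the planes being permuted by Galois), I would work over $\kbar$. There I use the coordinates of Remark~\ref{rem:8planes}, in which $X = \{x_1x_2 = x_3x_4 = x_5x_6\}$. In these coordinates $X$ is the projective closure of a torus quotient. It is the toric variety of the cone over the cube: $X$ is the image of $\PP^1\times\PP^1\times\PP^1$ under the map
\[ ((a_0:a_1),(b_0:b_1),(c_0:c_1)) \mapsto (a_0b_0c_0 : a_1b_1c_1 : \ldots), \]
which sends the eight monomials $a_ib_jc_l$ to pairs in such a way that $x_1x_2=x_3x_4=x_5x_6$. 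More precisely, $X$ is the quotient of $(\PP^1)^3$ by the subgroup $\mu_2$ of the torus, or equivalently it is given by the monomials $a_ib_jc_l$ with $i+j+l$ even. Under this description the eight planes of $X$ are the torus-invariant divisors, and the six singular points are the torus-fixed points corresponding to the faces of the cube.

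\textbf{Step 1: identify the four planes torically.} The $\Lambda_i$ form one of the two ``tetrahedral'' sets of four planes; the other set consists of their images under the involution~\eqref{invol}. By Lemma~\ref{lem:2planes} these four planes meet pairwise only at singular points. In the toric picture each plane is a facet of the polytope. I would write the polytope $P$ of $(X,H)$ as an octahedron: $H$ corresponds to the lattice octahedron with six vertices, matching the six fixed points. The eight planes are then its eight triangular faces, and the $\Lambda_i$ are four alternate faces. The divisor $3H - \sum\Lambda_i$ then corresponds to the polytope obtained from $3P$ by moving those four facets inward by one unit. This truncated shape is a lattice tetrahedron, namely the tetrahedron bounded by the other four facet hyperplanes, scaled suitably. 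I would then check that it has exactly $10$ lattice points, matching $\PP^9$. Those $10$ points are the lattice points of a standard $2\Delta_3$, so the linear system maps to the degree~$2$ Veronese image of $\PP^3$ in $\PP^9$. Consequently $Y$ is a twisted form of $v_2(\PP^3)$, that is, a Brauer--Severi threefold, being a $k$-form of $\PP^3$.

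\textbf{Step 2: birationality.} Because the polytope of $3H-\sum\Lambda_i$ is full-dimensional and its lattice points generate the same lattice as the characters of the torus of $X$, the map $\phi$ restricts to an isomorphism on the dense torus. Hence $\phi$ is birational. I would have to check the lattice-generation condition carefully, taking into account the index-$2$ sublattice coming from the $\mu_2$ quotient.

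\textbf{Step 3: regularity at smooth points.} This is the main obstacle. On a toric variety the base locus of a torus-invariant linear system is a union of orbits. So it suffices to check that at each torus-fixed point of every smooth affine chart some section is nonvanishing. The singular points of $X$ are exactly the six vertices, and the smooth torus-fixed points correspond to the remaining cones; concretely, the smooth points lying on the planes $\Lambda_i$ are in the interiors of lines and planes. For each orbit I would exhibit a lattice point of the truncated polytope lying on the corresponding face of $3P$. Since $\Lambda_i$ is not Cartier at the singular points, the only orbits where this can fail are the singular vertices. If the combinatorics proves messy, a fallback is to verify directly that near a smooth point of $\Lambda_i$ the divisor $\Lambda_i$ is Cartier and locally cut out by one equation, and then to exhibit an explicit cubic vanishing on all four planes but not at that point. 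Finally I would note that all of these statements descend to $k$ because the construction is Galois-equivariant.
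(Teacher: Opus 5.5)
Your overall strategy works, but it is the paper's computation in toric language rather than a new argument. The right model is that $X$ is the toric variety of the hypersimplex $P=\{x\in[0,1]^4:\sum x_i=2\}$, with character lattice $M=\{m\in\Z^4:\sum m_i=0\}$. The vertex $e_i+e_j$ is the coordinate $x_{ij}$. The paper's $\Lambda_l=\{x_I=0 : l\in I\}$ is the facet $x_l=0$, and the other four planes are the facets $x_l=1$. The polytope of $3H-\sum\Lambda_i$ is then $\{x_l\geqslant 1,\ \sum x_l=6\}$, a translate of $2\Delta_3$. Its ten lattice points are exactly the entries of the paper's symmetric matrix: for example $3e_1+e_2+e_3+e_4\leftrightarrow x_{12}x_{13}x_{14}$ and $2e_1+2e_2+e_3+e_4\leftrightarrow \alpha x_{12}$. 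The torus parametrisation is the paper's $\psi$. The toric language gives you a conceptual reason why the answer is $v_2(\PP^3)$, and a systematic substitute for the paper's ``direct calculation'' of regularity. The paper's version is shorter and fully explicit.

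Several of your specific statements are wrong.

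\textbf{The model $(\PP^1)^3/\mu_2$ is incorrect.} The monomials $a_ib_jc_l$ with $i+j+l$ even are only four. Moreover, any quotient of $(\PP^1)^3$ by a finite subgroup of its torus keeps the fan of $(\PP^1)^3$: six rays and eight maximal cones, hence eight fixed points. By contrast, $X$ has six fixed points. Its fan is the fan over the faces of a cube, with eight rays and six non-simplicial maximal cones. With the hypersimplex model, your Step 2 check is immediate, since the differences $2e_i-(e_i+e_j)=e_i-e_j$ generate $M$.

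\textbf{The truncated polytope is bounded by the wrong hyperplanes in your description.} It is bounded by the four shifted $\Lambda$-hyperplanes $x_l=1$, not by the other four. The other facets $x_i\leqslant 3$ become redundant, each touching the tetrahedron only at the vertex $3e_i+\sum_{j\ne i}e_j$.

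\textbf{Step 3, which you rightly flag as the crux, does not work as written.} First, all six torus-fixed points are nodes, so there are no smooth fixed points to test. What must be tested are the twelve one-dimensional orbits, i.e.\ the open edges. This suffices because the base locus in $X_{\rm sm}$ is closed and torus-invariant, and every larger orbit has an open edge in its closure.

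Second, the criterion must use the face cut out by the \emph{shifted} inequalities of $P_{3H-\sum\Lambda_i}$. No lattice point of that polytope lies on a face of $3P$ inside $\{x_l=0\}$, so read literally your test fails along every $\Lambda_l$. For the edge $\{x_i=1,\ x_l=0\}$ of $P$, the required point is $3e_i+e_j+e_k+e_l$, where $\{j,k\}=\{1,2,3,4\}\setminus\{i,l\}$. This is the diagonal entry $x_{ij}x_{ik}x_{il}$ of the paper's matrix. Equivalently, in $N=\Z^4/\Z(1,1,1,1)$, each two-dimensional cone $\langle -e_i,e_l\rangle$ ($i\ne l$) of the fan of $X$ lies in the cone $\langle e_j: j\ne i\rangle$ of the fan of $\PP^3$. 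The six maximal cones lie in no such cone, which is why regularity fails exactly at the nodes.

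Third, your fallback is misstated. Every cubic vanishing on $\Lambda_i$ vanishes at every point of $\Lambda_i$. What you need is a cubic in the system vanishing to order exactly one along $\Lambda_i$ at the point, so that its quotient by a local equation of $\sum_j\Lambda_j$ is a unit there.
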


\begin{proof}
  The planes $\Lambda_1, \ldots, \Lambda_4$ are jointly defined over
  $k$, by which we mean that they are permuted by the action of
  $\Gal(\kbar/k)$. Having made this observation, we are free for the
  remainder of the proof to work over $\kbar$.  In particular, we make
  the change of coordinates suggested in Remark~\ref{rem:8planes}.

  We label our coordinates on $\PP^5$ as $x_I = x_{ij}$ where
  $I = \{i,j\}$ runs over the subsets of $\{1,2,3,4\}$ of size 2. By
  Lemma~\ref{lem:2planes} we may assume that
  \[ X = \{ x_{12}x_{34} = x_{13}x_{24} = x_{14}x_{23} \} \subset
    \PP^5 \] and each $\Lambda_i$ is defined by the vanishing of the
  $x_I$ with $i \in I$.  The space of cubic forms vanishing on
  $\Lambda_1 \cup \ldots \cup \Lambda_4$ is spanned by the monomials
  $x_I x_J x_K$ with $\#(I \cup J \cup K) = 4$.  Inside this space, a
  complement to the space of cubic forms vanishing on $X$ is spanned
  by the entries on and above the leading diagonal of the following
  symmetric matrix, where for simplicity we put
  $\alpha = x_{12} x_{34}$.
 \[ \begin{pmatrix}
    x_{12}x_{13}x_{14} & \alpha x_{12} & \alpha x_{13} & \alpha x_{14} \\
    \alpha x_{12} & x_{12}x_{23}x_{24} & \alpha x_{23} & \alpha x_{24} \\
    \alpha x_{13} & \alpha x_{23} & x_{13}x_{23}x_{34} &  \alpha x_{34} \\
    \alpha x_{14} & \alpha x_{24} & \alpha x_{34} & x_{14}x_{24}x_{34}
  \end{pmatrix} \]
  This matrix defines a rational map $\phi : X \, - \!  \to Y$ where
  $Y \subset \PP^9$ is the locus of all $4 \times 4$ symmetric matrices
  of rank $1$.  Let $\psi : \PP^3 \, - \!  \to X$ be given by
 \begin{equation}
 \label{ratparam}
   (x_1: x_2:x_3:x_4) \mapsto (x_{12} : \ldots : x_{34})
   = (x_1x_2 : \ldots : x_3 x_4).
 \end{equation}
 The composite $\phi \circ \psi : \PP^3 \to Y$ is the $2$-uple
 embedding, which is clearly an isomorphism. It follows that $\phi$ is
 birational and $Y$ is a Brauer-Severi variety. The final statement is
 checked by direct calculation.
\end{proof}

Our alternative proof of Theorem~\ref{thm:hp} is immediate from
Proposition~\ref{prop:bs} and the Hasse principle for Brauer-Severi
varieties. Remark~\ref{rem:density} is also immediate.

\section{Application to visibility}
\label{sec:rel-vis}

The results of Section~\ref{sec:2sel} reduce the proof of
Theorem~\ref{thm:main} to showing that if $f$ and $g$ are non-singular
binary quartics with the same invariants $I$ and $J$, and both are
everywhere locally soluble, then their classes $\xi$ and $\eta$ in
$S^{(2)}(E_{I,J}/k)$ satisfy the hypotheses of Lemma~\ref{vislem}.

\begin{Lemma}
  \label{lem:loc-sol}
  Let $f$ and $g$ be non-singular binary quartics over $k$ with the
  same invariants $I$ and $J$. If both $f$ and $g$ are everywhere
  locally soluble then $X_{f,g}$ has a smooth $k_v$-point for all
  places $v$.
\end{Lemma}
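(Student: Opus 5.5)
The plan is to build a $k_v$-point of $X_{f,g}$ directly out of local points on the two genus one curves $y^2=f(x,z)$ and $y^2=g(x,z)$. We would use the Veronese identity $Q_f(x^2,xz,z^2)=f(x,z)$ together with the fact that the point $(x^2:xz:z^2)$ lies on the conic $u_0u_2-u_1^2=0$. We would then check that the resulting point avoids the six singular points of $X_{f,g}$ described in Section~\ref{sec:qi}.

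Fix a place $v$. Since $f$ is everywhere locally soluble, the smooth curve $C_f : y^2=f(x,z)$ in $\PP(1,2,1)$ has a $k_v$-point. A smooth curve over a local field with one $k_v$-point has infinitely many, by the implicit function theorem. The points with $y=0$ are the finitely many ramification points over the roots of $f$, so we may choose $(x:y:z)\in C_f(k_v)$ with $y\neq 0$. Similarly we choose $(x':y':z')\in C_g(k_v)$ with $y'\neq 0$. Now put
\[ (u_0:u_1:u_2:v_0:v_1:v_2) = \bigl( y' x^2 : y' xz : y' z^2 : y x'^2 : y x'z' : y z'^2 \bigr). \]
This point is nonzero because $y'\neq0$ and $(x,z)\neq(0,0)$. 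The first equation holds because $u_0u_2-u_1^2 = 0 = v_0v_2-v_1^2$. For the second, Lemma~\ref{lem:first-props} gives the identity $Q_f(x^2,xz,z^2)=f(x,z)$, so $Q_f(u)=y'^2 f(x,z)=y'^2y^2$ and $Q_g(v)=y^2 g(x',z')=y^2y'^2$. Hence the point lies in $X_{f,g}(k_v)$.

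It remains to see that this point is smooth on $X_{f,g}$, and we expect this to be the only step needing care. The discussion after~\eqref{cubic-squared} shows that every singular point of $X_{f,g}$ lies on the singular line of one of the three singular quadrics in the pencil. Such a line consists of points $(\lambda u:\mu v)$ with $u_0u_2-u_1^2\neq 0$, and a singular point has $\lambda\mu\neq0$, since it is one of two points swapped by the involution~\eqref{invol}. So every singular point has $u_0u_2-u_1^2\neq 0$. Our point has $u\neq 0$ and $u_0u_2-u_1^2=0$, so it is not singular.

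If one prefers not to rely on the description of the singular points, an alternative check is to compute the Jacobian matrix at our point directly. The gradient of $u_0u_2-u_1^2-v_0v_2+v_1^2$ is nonzero there because $u\ne0$ lies on a smooth conic. Independence from the gradient of $Q_f(u)-Q_g(v)$ would then follow from the condition $y,y'\neq 0$.
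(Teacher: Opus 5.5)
Your proposal is correct and is essentially the paper's proof: you take local points with $yy'\neq 0$, form the same point $(y'x^2 : y'xz : y'z^2 : yx'^2 : yx'z' : yz'^2)$, and conclude smoothness because every singular point of $X_{f,g}$ has $u_0u_2-u_1^2\neq 0$. Your extra justification of that last fact, that singular points have $\lambda\mu\neq 0$ on the singular lines, is a valid expansion of what the paper leaves implicit.
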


\begin{proof}
  By assumption there exist $x,y,z,X,Y,Z \in k_v$ with $y^2 = f(x,z)$
  and $Y^2 = g(X,Z)$. Since any smooth curve with a $k_v$-point has
  infinitely many $k_v$-points we may further assume that $yY \not=0$.
  Then $X_{f,g}$ has $k_v$-point
  \[(u_0:u_1:u_2:v_0:v_1:v_2) = (x^2 Y:xz Y:z^2Y : X^2 y : X Z y : Z^2
    y).\] This is a smooth point since, as noted in Section~\ref{sec:qi},
  the singular points on $X_{f,g}$ all have $u_0u_2 - u_1^2 \not= 0$.
\end{proof}

Lemma~\ref{lem:loc-sol} and Theorem~\ref{thm:hp} together show that
$X_{f,g}(k) \not= \emptyset$.  The proof of Theorem~\ref{thm:main} is
completed by the following proposition.  In the case that $E$ has a rational
$2$-torsion point, we also appeal to Remark~\ref{rem:density}.

\begin{Proposition}
  \label{prop:use-pt}
  Let $f$ and $g$ be non-singular binary quartics over $k$ with the
  same invariants $I$ and $J$.  Let $E = E_{I,J}$ be given
  by~\eqref{jac}.  Suppose that either
  \begin{enumerate}
  \item $X_{f,g}(k) \not= \emptyset$ and $E(k)[2] = 0$, or 
  \item the $k$-points on $X_{f,g}$ are Zariski dense. % in $X_{f,g}$.
  \end{enumerate}
  Then there exists an elliptic curve $F$ over $k$ and
  $\pi : E[2] \to F[2]$ an isomorphism of $\Gal(\kbar/k)$-modules such
  that the classes of $f$ and $g$ are in the kernel of the composition
  \begin{equation}
    \label{comp}
    H^1(k,E[2]) \stackrel{\pi_*}{\ra} H^1(k,F[2]) \ra H^1(k,F).
    \end{equation}
  \end{Proposition}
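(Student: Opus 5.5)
The plan is to use a $k$-point $P=(u_0:u_1:u_2:v_0:v_1:v_2)$ of $X_{f,g}$ to build a pair of genus one curves in $\PP^3$, each a $2$-covering of the same elliptic curve $F$, and each visibly having a $k$-point. Write $Q(x)=x_0x_2-x_1^2$ as in Lemma~\ref{lem:first-props}, and put $c=Q(u)=Q(v)$ and $d=Q_f(u)=Q_g(v)$; these equalities are exactly the defining equations of $X_{f,g}$. Then consider
\[ C_f=\{Q(x)=c\,w^2,\ Q_f(x)=d\,w^2\}\subset\PP^3,\qquad C_g=\{Q(x)=c\,w^2,\ Q_g(x)=d\,w^2\}\subset\PP^3 .\]
By construction $C_f$ contains the $k$-point $(u:1)$ and $C_g$ contains $(v:1)$. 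Their hyperplane sections $w=0$ are $\{Q=Q_f=0\}$ and $\{Q=Q_g=0\}$ in $\PP^2$. Parametrising the conic $Q=0$ by $(x^2:xz:z^2)$ and using $Q_f(x^2,xz,z^2)=f(x,z)$, these sections are exactly the four roots of $f$ and of $g$.

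Next I would apply Lemma~\ref{lem:qi-jac}. The pencil for $C_f$ contains the quadric $dQ-cQ_f$, which does not involve $w$ and so has rank $3$. Hence $C_f$ is a $2$-covering of the Jacobian $y^2=\det(x(Q-cw^2)+(Q_f-dw^2))=-(cx+d)\det(xQ+Q_f)$. By Lemma~\ref{lem:first-props}(i), $\det(xQ+Q_f)$ is, up to a constant and rescaling $x$, the cubic $x^3-27Ix-27J$. The same computation for $C_g$ gives the same quartic, because $g$ has the same invariants and we use the same $c,d$. So both curves are $2$-coverings of one curve $F$, the Jacobian of $y^2=-(cx+d)\cdot(\text{cubic})$. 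Sending the root of $cx+d$ to infinity, $F[2]$ is identified with the three roots of the cubic, and so are $E[2]$. This gives a Galois-equivariant isomorphism $\pi:E[2]\to F[2]$, and it is the same $\pi$ for $f$ and $g$.

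Then I would check that the class of $C_f$ in $H^1(k,F[2])$ is the class of the $F[2]$-torsor of points in its hyperplane section $w=0$. This is the analogue, in the quadric-intersection model, of the fact recalled at the end of Section~\ref{sec:2sel}. That torsor is the set of roots of $f$, so its class is $\pi_*$ of the class of $f$, provided one checks that the $E[2]$-action on the roots agrees with the $F[2]$-action through $\pi$. Both actions are governed by the three ways of splitting the four roots into pairs, which are indexed by the roots of the same cubic, so this reduces to a compatibility check. Since $C_f(k)\neq\emptyset$, the image in $H^1(k,F)$ vanishes, and likewise for $g$; this gives the kernel condition in \eqref{comp}.

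The main obstacle is nondegeneracy. We need $c\neq0$ and $-d/c$ not a root of the cubic, so that $F$ is an elliptic curve and $C_f,C_g$ are smooth by Lemma~\ref{lem:qi-smooth}. In case (ii) these are proper Zariski closed conditions on $X_{f,g}$, and Zariski density supplies a good point. In case (i), $E(k)[2]=0$ means the cubic has no rational root, while $-d/c\in k$, so only $c=0$ remains. That locus consists of points with $u,v$ on the conic $Q=0$, and I would try to show, for instance using the conics of Lemma~\ref{lem:getconic} or a line through the given point, that one rational point produces another with $c\neq0$. I would also need to verify that the compatibility of $\pi$ with the two torsor structures holds on the nose, not just up to an automorphism of $E[2]$.
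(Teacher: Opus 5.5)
\textbf{Gap: the degenerate point in case (i).}

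Your curves are the paper's curves. Write $c=\lambda$, $d=\mu$ and choose $\widetilde\lambda,\widetilde\mu$ with $\lambda\widetilde\mu-\mu\widetilde\lambda=1$. Then the pencil spanned by $Q-cw^2$ and $Q_f-dw^2$ is the paper's pencil spanned by $\mu Q-\lambda Q_f$ and $\widetilde\mu Q-\widetilde\lambda Q_f-w^2$. Your identification of $\pi$, and of the two torsor structures through the three pairings of the four roots, is also the paper's argument. The gap is in case (i).

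First, your nondegeneracy condition is misstated: $c\neq 0$ is not needed. What is needed is $\det(dQ-cQ_f)\neq 0$, i.e.\ that the binary quartic $-2(cx+dz)\det(xQ+zQ_f)$ has distinct roots. The extra root $(d:-c)$ is $k$-rational. When $c=0\neq d$ it is $(1:0)$, which is not a root of the cubic because $\det Q\neq 0$. So in case (i), whenever $(c,d)\neq(0,0)$, the hypothesis $E(k)[2]=0$ already guarantees the construction works with the given point.

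The genuinely bad case is $c=d=0$. There your $C_f$ collapses to the cone over $\{Q=Q_f=0\}$. Your plan to manufacture a second $k$-point on $X_{f,g}$ from the first is not carried out. Case (i) only gives you one point, so you would need something like unirationality of the singular threefold $X_{f,g}$ to make it work.

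\textbf{The missing observation.} No second point is needed. Suppose $c=d=0$ and $u\neq 0$. Then $u$ is a $k$-point of the conic $Q=0$, so $u=(x^2:xz:z^2)$ with $(x:z)\in\PP^1(k)$, and $Q_f(u)=f(x,z)=0$. Thus $f$ has a $k$-rational root, and its class in $H^1(k,E[2])$ is trivial. If instead $u=0$, then $v\neq 0$ and the same holds for $g$.

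For the remaining quartic (say $f$), run your construction with $u$ replaced by a sufficiently general point of $\PP^2(k)$, and set $c=Q(u)$, $d=Q_f(u)$. This is essentially Klenke's argument, as the paper notes. Since $Q$ and $Q_f$ are not proportional, a general $u$ gives $\det(dQ-cQ_f)\neq 0$. Then $C_f$ has the $k$-point $(u:1)$, and the trivial class of $g$ lies in any kernel. If both quartics have $k$-rational roots, take $F=E$ and $\pi$ the identity.

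This is how the paper closes case (i). With it, the rest of your argument goes through as in the paper.
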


\begin{proof}
  Let $(u_0:u_1:u_2:v_0:v_1:v_2)$ be a $k$-point on $X_{f,g}$. We put
  \begin{equation}
    \label{def:lamu}
  \lambda = u_0 u_2 - u_1^2 \quad \text{ and } \quad
    \mu = Q_f(u_0,u_1,u_2).
  \end{equation}
  If $\lambda = \mu = 0$ then either $f$ or $g$ has a $k$-rational
  root (i.e. linear factor) and so is trivial in $H^1(k,E[2])$. We
  comment on this case at the end of the proof. So for now we may
  suppose that
  $(\lambda :\mu) \in \PP^1(k)$. We pick
  $\widetilde{\lambda}, \widetilde{\mu} \in k$ with
  $\lambda \widetilde{\mu} - \mu \widetilde{\lambda} = 1$.

  We consider the quadric intersection
  \[ C_f = \left\{ \begin{aligned} \mu (x_0x_2 - x_1^2) - \lambda
        Q_f(x_0,x_1,x_2) &= 0 \\ \widetilde{\mu} (x_0x_2 - x_1^2) -
        \widetilde{\lambda} Q_f(x_0,x_1,x_2) &= y^2 \end{aligned}
    \right\} \subset \PP^3. \] If $E(k)[2]=0$ then by
  Lemma~\ref{lem:first-props}(i) and~\eqref{jac}
  the first quadric has rank $3$. If
  $E(k)[2] \not= 0$ then we use our additional hypothesis (ii) to
  adjust our choice of $k$-point so that the same conclusion holds.
  In other words, our threefold $X_{f,g}$ is fibred over the modular
  curve $X_E(2) \isom \PP^1$ (which parametrises elliptic curves
  $2$-congruent to $E$) and we want to make sure that our $k$-point is
  not above a cusp.
  
  Lemmas~\ref{lem:qi-jac} and~\ref{lem:first-props}(i) now show that
  $C_f$ is a $2$-covering of the elliptic curve
  \begin{equation}
    \label{ellF}
    F : \quad y^2 = -(9\mu x + 9 \widetilde{\mu})^3
    + 27 I (9\mu x + 9 \widetilde{\mu}) (\lambda x + \widetilde{\lambda})^2
    + 27 J  (\lambda x + \widetilde{\lambda})^3.
  \end{equation}
  An explicit isomorphism of $\Gal(\kbar/k)$-modules
  $\pi : E[2] \to F[2]$ is given, relative to the Weierstrass
  equations~\eqref{jac} and~\eqref{ellF}, by
 \begin{equation}
   \label{pi}
   (x,0) \mapsto (-(\widetilde{\lambda} x - 9
   \widetilde{\mu})/(\lambda x - 9 \mu),0).
  \end{equation}
 
  The first quadratic form in the definition of $C_f$ defines a conic
  with $k$-rational point $(x_0:x_1:x_2) = (u_0:u_1:u_2)$.  By
  parametrising this conic, we may rewrite $C_f$ as a double cover of
  $\PP^1$. The set of ramification points, as a set with
  $\Gal(\kbar/k)$-action, is isomorphic to
  \begin{equation}
   \label{another:qi}
   \{ x_0x_2 - x_1^2 = Q_f(x_0,x_1,x_2) = 0 \} \subset \PP^2,
 \end{equation}  
 and hence also isomorphic to the set of ramification points for
 $y^2 = f(x,z)$.  We upgrade this to an isomorphism of torsors, when
 $E[2]$ and $F[2]$ are identified via $\pi$. To do this we first note
 that $E[2] \setminus \{0\}$ and $F[2] \setminus \{0\}$ are naturally
 in bijection with the singular fibres of the pencil of quadrics
 defining~\eqref{another:qi}.  This gives a more conceptual
 explanation for~\eqref{pi}.  Next, since each singular fibre is a
 pair of lines, each element of
 $E[2] \setminus \{0\} = F[2] \setminus \{0\}$ naturally determines a
 partition of the $4$ points~\eqref{another:qi} into 2 subsets of size
 2.  This determines the torsor structure.

 Our discussion shows that the induced map
 $\pi_* : H^1(k,E[2]) \to H^1(k,F[2])$ sends the class of
 $y^2 = f(x,z)$ (a 2-covering of $E$) to the class of $C_f$ (a
 2-covering of $F$). The latter has trivial image in $H^1(k,F)$ since
 $C_f$ has $k$-rational point $(x_0:x_1:x_2:y) = (u_0:u_1:u_2:1)$.
 The class of $f$ is therefore in the kernel of the
 composition~\eqref{comp}.

  The proof is completed by repeating the same arguments for the
  binary quartic $g$.  In view of the definition of $X_{f,g}$ we may
  rewrite~\eqref{def:lamu} as
  \[ \lambda = v_0 v_2 - v_1^2 \quad \text{ and } \quad
   \mu = Q_g(v_0,v_1,v_2). \]
  We then obtain a $2$-covering
  \[ C_g = \left\{ \begin{aligned} \mu (x_0x_2 - x_1^2) - \lambda
        Q_g(x_0,x_1,x_2) &= 0 \\ \widetilde{\mu} (x_0x_2 - x_1^2) -
        \widetilde{\lambda} Q_g(x_0,x_1,x_2) &= y^2 \end{aligned}
    \right\} \subset \PP^3 \] of the {\em same} elliptic curve $F$. The
  isomorphism $\pi : E[2] \to F[2]$ is given by~\eqref{pi} and so is
  also unchanged.  Since $C_g$ has $k$-rational point
  $(x_0:x_1:x_2:y) = (v_0:v_1:v_2:1)$, its class in $H^1(k,F[2])$
  has trivial image in $H^1(k,F)$.  The class of $g$ is therefore in the
  kernel of the composition~\eqref{comp}.

  Finally we remark that if one of the binary quartics (say $g$)
  is trivial in $H^1(k,E[2])$, then the above argument is only needed
  for the other one (say $f$). For this we start (following what
  is essentially the proof of Klenke~\cite{Klenke}) with any
  sufficiently general $k$-point $(u_0:u_1:u_2)$ on $\PP^2$ in place
  of a $k$-point on $X_{f,g}$. This avoids the problem that the
  latter could have $u_0 = u_1= u_2 = 0$.
\end{proof}

\section{Examples}
\label{sec:examples}

We give an example to illustrate the proof of Theorem~\ref{thm:main}.

\begin{Example} Let $E/\Q$ be the elliptic curve
\[  y^2 + y = x^3 - x^2 - 929 x - 10595 \]
labelled $571a1$ in \cite{CrTables}.
A $2$-descent shows that $S^{(2)}(E/\Q) \isom (\Z/2\Z)^2$, and its
non-zero elements are represented by
\begin{align*}
f(x,z) &= -11 x^4 + 68 x^3 z - 52 x^2 z^2 - 164 x z^3 - 64 z^4, \\
g(x,z) &= -4 x^4 - 60 x^3 z - 232 x^2 z^2 - 52 x z^3 - 3 z^4, \\
h(x,z) &= -31 x^4 - 78 x^3 z + 32 x^2 z^2 + 102 x z^3 - 53 z^4.
\end{align*}
Each of these binary quartics has invariants $I=44608$ and $J=18842960$.
In fact it may be shown (see \cite{GJPST}) that $E(\Q) = 0$ and
$\Sha(E/\Q) \isom (\Z/2\Z)^2$.

On the quadric intersection $X_{f,g} \subset \PP^5$ we find the point
\[ (u_0 : u_1 : u_2 : v_0 : v_1 : v_2) = (3 : 1 : 0 : -1 : 0 : 1). \]
Following the proof of Proposition~\ref{prop:use-pt}, with
$(\lambda,\mu,\widetilde{\lambda},\widetilde{\mu}) = (-1,211/3,0,-1)$,
shows that all of $\Sha(E/\Q)$ is visible
in an abelian surface isogenous to $E \times F$ where $F/\Q$ is
the elliptic curve
\[ y^2 + y = x^3 + x^2 - 4 x + 2 \]
labelled $571b1$ in \cite{CrTables}. This is the first pair of
elliptic curves in \cite[Table 2]{CM}.
\end{Example}

We now give examples to show that our proof of Theorem~\ref{thm:main}
for elements of $\Sha(E/\Q)[2]$ does not extend to elements of
$H^1(\Q,E)[2]$. In other words, we really do need the hypothesis in
Lemma~\ref{lem:loc-sol} that both binary quartics are everywhere
locally soluble.

\begin{Example}
  \label{ex:hpfails}
  Let $E/\Q$ be the elliptic curve $y^2 = (x-1)(x-3)(x+3)$ 
  labelled $192a2$ in \cite{CrTables}.
  The non-singular binary quartics
  \begin{align*}
    f(x,z) &= -x^4 - 4 x^2 z^2 - z^4, \\
    g(x,z) &= x^4 + 2 x^3 z + 2 x^2 z^2 + 4 x z^3 + 4 z^4,
  \end{align*} have the same invariants $I = 28$ and $J = -160$,
  and represent $2$-coverings of $E$.
  The associated quadric intersection is 
  \[ X_{f,g} = \left\{ \begin{aligned} 
    (u_0 + u_2)^2 + 2 u_1^2 + (v_0 + v_1)^2 + (v_1 + 2 v_2)^2 &= 0\\
    (u_0 - u_2)^2 + 6 u_1^2 + (v_0+v_1+2 v_2)^2 - 3 v_1^2 &= 0
                       \end{aligned} \right\} \subset \PP^5. \]
The $6$ singular points on $X_{f,g}$ are
$(\sqrt{2} : 0 : -\sqrt{2} : 2 : -2 : 1)$,
$(\sqrt{-2} : 0 : \sqrt{-2} : -2 : 0 : 1 )$,
$(0 : \sqrt{-1} : 0 : 2 : -1 : 1)$
and their Galois conjugates.
For $p \geqslant 5$ a prime, the genus one curves
$y^2 = f(x,z)$ and $y^2 = g(x,z)$
have good reduction, and so are soluble over $\Q_p$.
It follows by Lemma~\ref{lem:loc-sol}
that $X_{f,g}$ has a smooth $\Q_p$-point. There is also a smooth
$\Q_2$-point given by \[ (u_0:u_1:u_2:v_0:v_1:v_2)
  = ( \alpha^2 - 1: 1: 0: \alpha^2 - 1: \alpha: 1)\]
where $\alpha \in \Z_2$ is a root of $x^4 + x^3 - x^2 + x + 4 = 0$.
However, our first and second equations for $X_{f,g}$ show that
$X_{f,g}(\R)$ and $X_{f,g}(\Q_3)$ consist only of
the singular points defined over $\Q(\sqrt{2})$ and $\Q(\sqrt{-2})$
respectively.

In conclusion $X_{f,g}$ is a counterexample to the Hasse principle,
but not the smooth Hasse principle.
\end{Example}

Finally we give some examples with $X_{f,g}(\Q_p) = \emptyset$.
\begin{Proposition} Let $p \geqslant 7$ be
  a prime with $p \equiv 2,3 \pmod{5}$. The binary
  quartics 
\begin{align*}
  f(x,z) &= 20 x^2 z^2 + p (x^4 + 30 x^2 z^2 + 5 z^4),  \\
  g(x,z) &= (x^2 - 5 z^2)^2 + p (x^4 + 4 x^3 z - 30 x^2 z^2
    + 20 x z^3 + 25 z^4),
\end{align*}
are non-singular and have the same invariants. However, the associated
quadric intersection $X_{f,g} \subset \PP^5$ has no $\Q_p$-points.
\end{Proposition}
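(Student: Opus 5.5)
The plan has two parts: a direct verification of the algebraic claims, and then a $p$-adic descent for the non-existence of $\Q_p$-points. For the first part, I would compute $I$ and $J$ from \eqref{def:IJ} for both quartics as polynomials in $p$ and check that they agree identically in $p$. This is routine, and I would check it by machine. Non-singularity then amounts to $4I^3-J^2\neq 0$. For $f$, which is a quadratic in $x^2$ with no odd terms, this reduces to $p\neq 0$ together with the non-vanishing of the discriminant $(20+30p)^2-20p^2$. Both hold for every prime $p$ in question.

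For the local statement, suppose $(u_0:u_1:u_2:v_0:v_1:v_2)\in X_{f,g}(\Q_p)$, scaled to be a primitive vector over $\Z_p$. Let $D=u_0u_2-u_1^2=v_0v_2-v_1^2$ be the common value of the first equation. Write $f=f_0+pf_1$ and $g=g_0+pg_1$, where $f_0=20x^2z^2$ and $g_0=(x^2-5z^2)^2$. A short computation with the definition of the associated quadratic form gives the exact identities
\[ Q_{f_0}(u)=\tfrac{20}{3}D+20u_1^2, \qquad Q_{g_0}(v)=(v_0-5v_2)^2+\tfrac{20}{3}D. \]
Hence the second equation of $X_{f,g}$ becomes
\[ 20u_1^2-(v_0-5v_2)^2 = p\bigl(Q_{g_1}(v)-Q_{f_1}(u)\bigr). \]
The left-hand side is $-N(w)$, where $w=(v_0-5v_2)+2u_1\sqrt5$ and $N$ is the norm from $\Q(\sqrt5)$. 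The condition $p\equiv 2,3 \pmod 5$ means exactly that $p$ is inert in $\Q(\sqrt5)$, so $v_p$ of the left-hand side is even. Reducing mod $p$ then gives $u_1\equiv 0$ and $v_0\equiv 5v_2 \pmod p$, and the left-hand side is in fact divisible by $p^2$.

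It therefore remains to show that $R:=Q_{g_1}(v)-Q_{f_1}(u)$ cannot be divisible by $p$; if $p\nmid R$, the two sides have valuations of different parity, a contradiction. Substituting $u_1\equiv 0$ and $v_0\equiv 5v_2$ gives
\[ R\equiv 40v_1v_2-20v_1^2-u_0^2-10u_0u_2-5u_2^2 \pmod p. \]
The first equation reduces to $u_0u_2\equiv 5v_2^2-v_1^2 \pmod p$. If $u_0,u_2,v_1,v_2$ all vanish mod $p$, then so does every coordinate, contradicting primitivity. The aim is to show that $R\equiv0$ forces this.

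This last step is where I expect the real difficulty. The two conditions $R\equiv 0$ and $u_0u_2\equiv 5v_2^2-v_1^2$ cut out a curve in $\PP^3$ over $\F_p$, and if it is a smooth genus one curve it will have $\F_p$-points. So I would first try to diagonalise both forms, using the nonsquareness of $5$ once more, to see whether the curve degenerates into a union of conjugate lines or points defined only over $\F_{p^2}$. If that fails, I would run a second round of the descent, writing $u_1=pu_1'$ and $v_0=5v_2+pv_0'$ and comparing both equations modulo $p^2$, with the aim of again reaching a norm form from $\Q(\sqrt5)$ equal to $p$ times a unit.
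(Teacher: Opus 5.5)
Everything up to your two congruences is correct and follows the paper's route. The identities for $Q_{f_0}$ and $Q_{g_0}$ exhibit $20u_1^2-(v_0-5v_2)^2$ as a member of the pencil mod $p$. Nonsquareness of $5$ then forces $u_1\equiv v_0-5v_2\equiv 0$. Your conditions $u_0u_2+v_1^2-5v_2^2\equiv 0$ and $R\equiv 0$ are exactly the paper's two congruences, with $(x_1,x_2,x_3,x_4)=(u_0,u_2,v_1,v_2)$.

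However, you stop at the step that carries the content of the proposition: showing that this pair of congruences has only the trivial solution over $\F_p$. What you offer there is a plan ("try to diagonalise \dots; if that fails, a second round of descent"), not an argument, so the proof has a genuine gap. Diagonalising over $\F_p$ will not work as stated. The determinant of $-R+t(u_0u_2+v_1^2-5v_2^2)$ is a constant times $((t+10)^2-20)^2$. So every $\F_p$-rational member of the pencil has rank $4$, and the degeneration you are looking for only appears over $\F_p(\sqrt5)$.

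The missing idea is the linear combination at $t=2\sqrt5-10$, which gives the identity of quadratic forms over $\F_{p^2}=\F_p(\sqrt5)$
\[ -R+(2\sqrt5-10)(u_0u_2+v_1^2-5v_2^2)\equiv (u_0+\sqrt5\,u_2)^2+\tfrac{5+\sqrt5}{2}\bigl(2v_1-(5-\sqrt5)v_2\bigr)^2 \pmod p . \]
The coefficient $-\frac{5+\sqrt5}{2}$ equals $(\zeta-\zeta^4)^2$ when $\sqrt5=1+2(\zeta+\zeta^4)$ for a primitive $5$th root of unity $\zeta$. It is a nonsquare in $\F_{p^2}$. The paper deduces this from $[\F_p(\zeta):\F_p]=4$; equivalently, its norm to $\F_p$ is $5$, a nonsquare.

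Hence for $u_0,u_2,v_1,v_2\in\F_p$ both linear forms $u_0+\sqrt5\,u_2$ and $2v_1-(5-\sqrt5)v_2$ must vanish. Since $\sqrt5\notin\F_p$, this gives $u_0\equiv u_2\equiv v_1\equiv v_2\equiv 0$. Together with $u_1\equiv 0$ and $v_0\equiv 5v_2$, this contradicts primitivity.

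Your guess about the shape of the curve was right in spirit. It is a cycle of four lines defined over $\F_{p^4}$, cyclically permuted by Frobenius, so it has no $\F_p$-points. Establishing that requires exactly this computation, and the fallback second descent step is unnecessary.
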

\begin{proof}
  Both $f$ and $g$ have invariants
  \begin{align*}
 I &= 2^4 \cdot 5 \cdot(12 p^2 + 15 p + 5), \\
     J &= -2^6 \cdot 5^2 \cdot(3 p + 2) (9 p^2 + 15 p + 5),
  \end{align*}
  and discriminant
  \[ 4 I^3 - J^2 = 2^{12} \cdot 3^3 \cdot 5^3 \cdot p^2
    (11 p^2 + 15 p + 5)^2 \not = 0. \]
  One of the quadrics in the pencil defining $X_{f,g} \subset \PP^5$ reduces
  mod $p$ to $20 u_1^2 - (v_0 - 5 v_2)^2$. So if
  $(u_0:u_1:u_2:v_0:v_1:v_2)$ is a $\Q_p$-point on $X_{f,g}$, with all
  coordinates in $\Z_p$, but not all in $p\Z_p$, then (since
  $5$ is a quadratic nonresidue mod $p$) we must have
  $u_1 \equiv v_0 - 5 v_2 \equiv 0 \pmod{p}$. We may
  therefore write
  \begin{equation}
  \label{subst}
  (u_0,u_1,u_2,v_0,v_1,v_2) = (x_1, p x_5, x_2, 5 x_4 + p x_6, x_3, x_4)
  \end{equation}
  for some $x_1, \ldots, x_6 \in \Z_p$. These then satisfy
  \begin{align*}
      x_1 x_2 + x_3^2 - 5 x_4^2 &\equiv 0 \pmod{p}, \\
    x_1^2 + 10 x_1 x_2 + 5 x_2^2 + 20 x_3^2 - 40 x_3 x_4 &\equiv 0 \pmod{p}.
  \end{align*}
  Since $p \equiv 2,3 \pmod{5}$ we have $[\F_p(\zeta):\F_p] = 4$,
  where $\zeta$ is a primitive $5$th root of unity. We put $\sqrt{5} =
  1 + 2(\zeta + \zeta^4)$, and write $\widetilde{x}_i$ for $x_i \pmod{p}$.
  Taking a linear
  combination of the last two equations gives 
  \[  (\widetilde{x}_1 + \sqrt{5} \widetilde{x}_2)^2
    = (\zeta - \zeta^4)^2 (2 \widetilde{x}_3
    - (5 - \sqrt{5}) \widetilde{x}_4)^2. \]
  It follows that $x_1, \ldots, x_4$ all vanish mod $p$. Combined
  with~\eqref{subst} this gives that $u_0, u_1,u_2,v_0,v_1, v_2$ all vanish
  mod $p$, contradicting our earlier assumption.
 \end{proof}

\section{Relation to the obstruction map}
\label{sec:ob}

The obstruction map $\Ob_n : H^1(k,E[n]) \to \Br(k)$ is defined in
\cite{CFOSS, ONeil}.  Taking $n=2$, the significance of this map is
that a $2$-covering $\xi \in H^1(k,E[2])$ may be represented by a
binary quartic if and only if $\Ob_2(\xi) = 0$.

Let $\xi,\eta \in H^1(k,E[2])$ be represented by binary quartics $f$
and $g$.  We saw in Section~\ref{sec:bs} that $X_{f,g}$ is birational
to a Brauer-Severi variety $Y$. It is natural to ask: what is the
class of $Y$ in $\Br(k)$? We guess it should be $\Ob_2(\xi + \eta)$.
This suggested the following result, which gives an alternative (more
computational) proof of Theorem~\ref{thm:main}.  We work over a field
$k$ of characteristic $0$.

\begin{Proposition}
\label{prop:ob}
Let $E/k$ be an elliptic curve.  Let $\xi,\eta \in H^1(k,E[2])$ with
$\Ob_2(\xi) = \Ob_2(\eta) = 0$.  The following statements are
equivalent.
\begin{enumerate}
\item There exists an elliptic curve $F/k$, and $\pi: E[2] \to F[2]$
  an isomorphism of $\Gal(\kbar/k)$-modules, such that $\xi$ and
  $\eta$ are in the kernel of the composition
\[ H^1(k,E[2]) \stackrel{\pi_*}{\ra} H^1(k,F[2]) \ra H^1(k,F). \]
\item  $\Ob_2(\xi + \eta) = 0$.
\end{enumerate}
\end{Proposition}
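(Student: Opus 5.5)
We will prove the equivalence by interpreting $\Ob_2$ through the Brauer class of the associated $\PP^1$-fibration and using the theta-group description. Recall that for $\zeta \in H^1(k,E[2])$, the class $\Ob_2(\zeta)$ is the Brauer class of the Brauer--Severi curve $S_\zeta$, the twist of $\PP^1 = \PP(H^0(E,2O)^\vee)$ by the image of $\zeta$ under $H^1(k,E[2]) \to H^1(k,\PGL_2)$. Moreover $\Ob_2$ is a quadratic map whose associated bilinear form is the cup product with the Weil pairing:
\[ \Ob_2(\xi+\eta) - \Ob_2(\xi) - \Ob_2(\eta) = \xi \cup_{e_2} \eta \in \Br(k)[2]. \]
We take this from \cite{CFOSS, ONeil} (Zarhin's formula). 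Under the hypothesis $\Ob_2(\xi)=\Ob_2(\eta)=0$, condition (ii) is therefore equivalent to $\xi \cup_{e_2} \eta = 0$.

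For (i)$\Rightarrow$(ii), suppose $\pi_*\xi$ and $\pi_*\eta$ lie in the image of $\delta_F : F(k)/2F(k) \to H^1(k,F[2])$. The obstruction map of $F$ vanishes on this image. Since $\pi$ respects the Weil pairing (any isomorphism $E[2]\to F[2]$ does, as $\mu_2$ has only the trivial automorphism), we get $\xi\cup_{e_2}\eta = \pi_*\xi \cup_{e_2} \pi_*\eta$. The cup product of two Kummer classes $\delta_F(P)\cup\delta_F(Q)$ is zero: this follows from the quadratic formula, since $\Ob_2^F(\delta P+\delta Q)=\Ob_2^F(\delta(P+Q))=0$. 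Hence $\Ob_2(\xi+\eta)=0$.

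For (ii)$\Rightarrow$(i), we first represent $\xi,\eta$ by binary quartics $f,g$ with invariants $I,J$. The plan is to show that $\Ob_2(\xi+\eta)=0$ forces $X_{f,g}(k)$ to be Zariski dense, and then to apply Proposition~\ref{prop:use-pt}(ii). By Proposition~\ref{prop:bs}, $X_{f,g}$ is birational to a Brauer--Severi threefold $Y\subset\PP^9$. Since $k$-points on a split $Y\cong\PP^3$ are dense, it suffices to show that $[Y]=\Ob_2(\xi+\eta)$ in $\Br(k)$. This identification is the main obstacle.

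To prove it, we would reduce via Lemma~\ref{lem:first-props}(ii) and twisting to the case $g$ fixed, and compute the Galois cocycle. Over $\kbar$, fix a proper equivalence $M\in\GL_2$ with $g=f\circ M$ (suitably scaled). The rational parametrisation~\eqref{ratparam} identifies $Y_{\kbar}$ with $\PP^3=\PP(\kbar^2\otimes\kbar^2)$, where the first factor carries the $\PP^1$ for $f$ and the second the $\PP^1$ for $g$. Galois then acts through $\rho_\xi\otimes\rho_\eta$, where $\rho_\xi,\rho_\eta$ are lifts to $\GL_2$ of the $\PGL_2$-cocycles describing $S_\xi\cong\PP^1$ and $S_\eta\cong\PP^1$. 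These lifts exist because $\Ob_2(\xi)=\Ob_2(\eta)=0$.

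The tensor product cocycle defines a class in $H^1(k,\PGL_4)$. Its image in $\Br(k)$ is the obstruction to lifting, and this should equal the commutator pairing of the theta groups, i.e.\ $\xi\cup_{e_2}\eta$. We would verify this concretely: the theta group of $E[2]$ acting on $\kbar^2$ is the quaternion-type Heisenberg group, and two commuting-up-to-$e_2$ actions tensor together.

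Combining these with the quadratic formula gives $[Y]=\Ob_2(\xi+\eta)$. If instead the cocycle calculation proves awkward, we would verify directly that $\xi\cup\eta=0$ yields a $k$-point on $Y$. Once $X_{f,g}(k)$ is known to be dense, Proposition~\ref{prop:use-pt} produces $F$ and $\pi$, which gives (i).
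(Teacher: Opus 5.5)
Your proof of (i)$\Rightarrow$(ii) is correct. Getting $\delta_F(P)\cup\delta_F(Q)=0$ from the quadratic formula together with $\Ob_2^F\circ\delta_F=0$ is a tidy substitute for the paper's appeal to a second Tate pairing. One correction: $\pi$ preserves $e_2$ because $(\Z/2\Z)^2$ carries a unique nondegenerate alternating pairing, not merely because $\mu_2$ has no nontrivial automorphisms.

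The direction (ii)$\Rightarrow$(i) has a genuine gap. It rests entirely on $[Y]=\Ob_2(\xi+\eta)$, which the paper only offers as a guess and which you do not prove. Your cocycle sketch actually computes the wrong thing. If $\rho_\xi,\rho_\eta$ are $\GL_2$-valued cocycles lifting the classes of $S_\xi,S_\eta$, then $\rho_\xi\otimes\rho_\eta$ is already a $\GL_4$-valued cocycle, so the twisted $\PP^3$ is split. More generally such a tensor twist has class $\Ob_2(\xi)+\Ob_2(\eta)$, not $\xi\cup\eta$. That model would make $Y$ split for every pair of binary quartics $f,g$. This contradicts Example~\ref{ex:hpfails}, where $X_{f,g}$ has no smooth real points and hence $Y(\R)=\emptyset$. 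The nontrivial twist really comes from a choice made over $\kbar$: the identification $Y_{\kbar}\isom\PP(\kbar^2\otimes\kbar^2)$ depends on a proper equivalence, or a $(2,2)$-form, linking $f$ and $g$, and Galois moves this choice according to the class $\xi+\eta$. You do not analyse this. Your fallback (``verify directly that $\xi\cup\eta=0$ yields a $k$-point on $Y$'') just restates the goal.

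The missing idea is to use $\Ob_2(\xi+\eta)=0$ geometrically rather than through a cocycle identity.
\begin{enumerate}
\item Represent $\xi,\eta,\xi+\eta$ by binary quartics $f,g,h$ with the same invariants.
\item Rescale all three by a common $\lambda\in k^\times$ so that $y^2=h(x,z)$ has a $k$-point; this leaves $X_{f,g}$ unchanged.
\item Then $y^2=f(x,z)$ and $y^2=g(x,z)$ are $k$-isomorphic. Setting aside the case $\xi=\eta$, handled as at the end of the proof of Proposition~\ref{prop:use-pt}, this common curve maps to $\PP^1\times\PP^1$ with image given by a $k$-rational $(2,2)$-form whose two discriminants are $f$ and $g$.
\item The coefficients of this form give an explicit $k$-rational twisted form of~\eqref{ratparam}. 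Hence $X_{f,g}(k)$ is Zariski dense, and Proposition~\ref{prop:use-pt}(ii) gives (i).
\end{enumerate}
In your terms, this $(2,2)$-form is exactly what supplies a $k$-rational splitting of $Y$, without ever computing $[Y]$ in general.
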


\begin{proof}
  ``(i) $\Rightarrow$ (ii)'' We write $\cup$ for the Tate pairing
  \[ H^1(k,E[n]) \times H^1(k,E[n]) \to \Br(k) \] given by the Weil
  pairing and cup product.  It is known (see \cite{ONeil,Zarhin}) that
  for any $\xi,\eta \in H^1(k,E[n])$ we have
  \[ \xi \cup \eta = \Ob_n(\xi + \eta) - \Ob_n(\xi) - \Ob_n(\eta). \]
  Compatibility with another pairing, also due to Tate (see for example
  \cite[Proposition 2.1]{CTPPS} or \cite[Section 5]{ONeil}), shows
  that $\xi \cup \eta = 0$ whenever $\xi$ and $\eta$ are both in the
  image of the connecting map $E(k)/nE(k) \to H^1(k,E[n])$.

  As in the statement of the proposition, we now take
  $\xi,\eta \in H^1(k,E[2])$ with $\Ob_2(\xi) = \Ob_2(\eta) = 0$.
  Then
  $\Ob_2(\xi+\eta) = \xi \cup \eta = \pi_*(\xi) \cup \pi_*(\eta) = 0$,
  where the first and last equality follow from the two facts we just
  quoted, and the middle equality holds since $\pi : E[2] \to F[2]$
  preserves the Weil pairing.  This must be the case since there is
  only one nondegenerate alternating pairing on $(\Z/2\Z)^2$.
  
  ``(ii) $\Rightarrow$ (i)'' We are given that
  $\Ob_2(\xi) = \Ob_2(\eta) = \Ob_2(\xi + \eta) = 0$. We may therefore
  represent $\xi$, $\eta$, $\xi+\eta$ by binary quartics $f$, $g$,
  $h$, all with the same invariants.  Our aim is to show that the
  $k$-points on $X_{f,g}$ are Zariski dense, since (i) then follows by
  Proposition~\ref{prop:use-pt}.  We are free to replace $f,g,h$ by
  $\lambda f, \lambda g, \lambda h$ for any $\lambda \in k^\times$, as
  this does not change $X_{f,g}$. We may therefore suppose that
  $y^2 = h(x,z)$ has a $k$-point, equivalently $y^2 = f(x,z)$ and
  $y^2 = g(x,z)$ are $k$-isomorphic as curves.  Let $C$ be this genus
  one curve, equipped with two different maps $C \to \PP^1$. For the
  same reasons as we explained at the end of the proof of
  Proposition~\ref{prop:use-pt}, we may ignore the case where $f$ and
  $g$ are equivalent. The image of the map $C \to \PP^1 \times \PP^1$
  is therefore defined by a $(2,2)$-form:
  \begin{equation*}
F(x_1,x_2;y_1,y_2) = \begin{pmatrix} x_1^2 & x_1 x_2 & x_2^2 \end{pmatrix}
\begin{pmatrix} 
a_{11} & a_{12} & a_{13} \\
a_{21} & a_{22} & a_{23} \\
a_{31} & a_{32} & a_{33} 
\end{pmatrix} \begin{pmatrix} y_1^2 \\ y_1y_2 \\ y_2^2 \end{pmatrix}.
\end{equation*}
We may compute this $(2,2)$-form by specialising the $(2,2,2)$-form in
\cite[Theorem~8.2]{bq-ctp} at a $k$-point on $y^2 = h(x,z)$.  The
binary quartics $f(x_1,x_2)$ and $g(y_1,y_2)$ are the discriminant of
$F$, when this is viewed as a binary quadratic form in the other set
of variables.  A short calculation reveals that
\[ X_{f,g} = \left\{ \begin{aligned} u_0 u_2 - u_1^2 &= v_0 v_2 - v_1^2 \\
l_2^2 - 4 l_1 l_3  &= m_2^2 - 4 m_1 m_3 \end{aligned} \right\} \subset \PP^5 \]
where
\[
\begin{pmatrix} l_1 \\ l_2 \\ l_3 \end{pmatrix}
= \begin{pmatrix} 
a_{11} & a_{21} & a_{31} \\
a_{12} & a_{22} & a_{32} \\
a_{13} & a_{23} & a_{33} 
\end{pmatrix} \begin{pmatrix} u_0 \\ u_1 \\ u_2 \end{pmatrix}
\,\, \text{ and } \,\,
\begin{pmatrix} m_1 \\ m_2 \\ m_3 \end{pmatrix}
= \begin{pmatrix} 
a_{11} & a_{12} & a_{13} \\
a_{21} & a_{22} & a_{23} \\
a_{31} & a_{32} & a_{33} 
\end{pmatrix} \begin{pmatrix} v_0 \\ v_1 \\ v_2 \end{pmatrix}.
\]
The rational parametrisation (a twisted form of~\eqref{ratparam})
\begin{align*}
  u_0 &= -a_{21} x^2 - a_{22} x y - a_{23} y^2 - 2 a_{31} x z - a_{32} (x t + y z) - 2 a_{33} y t, \\
  u_1 &= a_{11} x^2 + a_{12} x y + a_{13} y^2 - a_{31} z^2 - a_{32} z t - a_{33} t^2, \\
  u_2 &= a_{21} z^2 + a_{22} z t + a_{23} t^2 + 2 a_{11} x z + a_{12} (x t + y z) + 2 a_{13} y t, \\
  v_0 &= -a_{12} x^2 - a_{22} x z - a_{32} z^2 - 2 a_{13} x y - a_{23} (x t + y z) - 2 a_{33} z t, \\
  v_1 &= a_{11} x^2 + a_{21} x z + a_{31} z^2 - a_{13} y^2 - a_{23} y t - a_{33} t^2, \\
  v_2 &= a_{12} y^2 + a_{22} y t + a_{32} t^2 + 2 a_{11} x y + a_{21} (x t + y z) + 2 a_{31} z t,
\end{align*}
then shows that the $k$-points on $X_{f,g}$ are Zariski dense.
\end{proof}  

We now take $k$ a number field. If $\xi,\eta \in S^{(2)}(E/k)$ then a
standard argument, using the local-global principle for the Brauer
group, shows that $\Ob_2(\xi) = \Ob_2(\eta) = \Ob_2(\xi + \eta) =
0$. Combining Lemma~\ref{vislem} and Proposition~\ref{prop:ob}
therefore gives an alternative (more computational) proof of
Theorem~\ref{thm:main}.


\begin{thebibliography}{MM}
\frenchspacing
\renewcommand{\baselinestretch}{1}

\bibitem{AS}
A. Agashe and W.A. Stein,
Visibility of Shafarevich-Tate groups of abelian varieties,
{\em J. Number Theory} {\bf 97} (2002), no. 1, 171--185.

\bibitem{AS2}
A. Agashe and W.A. Stein,  
Visible evidence for the Birch and Swinnerton-Dyer conjecture
for modular abelian varieties of analytic rank zero,
With an appendix by J.E. Cremona and B. Mazur,
{\em Math. Comp.} {\bf 74} (2005), no. 249, 455--484.

\bibitem{AKM3P}
S.Y. An, S.Y. Kim, D.C. Marshall, S.H. Marshall, W.G. McCallum
and A.R. Perlis,
Jacobians of genus one curves,
{\em J. Number Theory} {\bf 90} (2001), no. 2, 304--315.

\bibitem{BCC}
B.S. Banwait, J. Caro and S. Chidambaram,
{\em On the visibility category of the Shafarevich--Tate group},
2026, \url{arXiv:2601.21519 [math.NT]}

\bibitem{BSDI}
B.J. Birch and H.P.F. Swinnerton-Dyer, 
Notes on elliptic curves, I.
{\em J. reine angew. Math.} {\bf{212}} (1963) 7--25.

\bibitem{magma}
W. Bosma, J. Cannon and C. Playoust,
The Magma algebra system, I. The user language,
{\em J. Symbolic Comput.}, {\bf{24}} (1997), 235--265. 

\bibitem{CaIV}
J.W.S. Cassels, 
Arithmetic on curves of genus 1, 
IV. Proof of the Hauptvermutung,
{\em J. reine angew. Math.} {\bf{211}} (1962) 95--112.

\bibitem{CSS}
J.-L. Colliot-Th\'el\`ene, J.-J. Sansuc and H.P.F. Swinnerton-Dyer,
Intersections of two quadrics and Châtelet surfaces, I.
{\em J. reine angew. Math.} {\bf 373} (1987), 37--107,
II. {\em J. reine angew. Math.} {\bf{374}} (1987), 72--168. 

\bibitem{CrTables}
J.E. Cremona, 
{\em Algorithms for modular elliptic curves},
Second edition, Cambridge University Press, Cambridge, 1997. 

\bibitem{Cr}
J.E. Cremona, 
Classical invariants and 2-descent on elliptic curves,
{\em J. Symbolic Comput.} {\bf{31}} (2001), no. 1-2, 71--87.

\bibitem{CFOSS}
J.E. Cremona, T.A. Fisher, C. O'Neil, D. Simon and M. Stoll,  
Explicit $n$-descent on elliptic curves, I. Algebra,
{\em J. reine angew. Math.} {\bf{615}} (2008), 121--155.

\bibitem{CF}
J.E. Cremona and T.A. Fisher, 
On the equivalence of binary quartics,
{\em J. Symbolic Comput.} {\bf{44}} (2009), no. 6, 673--682.

\bibitem{CM}
J.E. Cremona and B. Mazur,
Visualizing elements in the Shafarevich-Tate group,
{\em Experiment. Math.} {\bf 9} (2000), no. 1, 13--28.

\bibitem{CTPPS}
T.A. Fisher,
The Cassels-Tate pairing and the Platonic solids,
{\em J. Number Theory} {\bf 98} (2003), no. 1, 105--155.

\bibitem{invis}
T.A. Fisher,  
Invisibility of Tate-Shafarevich groups in abelian surfaces,
{\em Int. Math. Res. Not.} {\bf 2014}, no. 15, 4085--4099.

\bibitem{vis7}
T.A. Fisher,
Visualizing elements of order 7 in the Tate-Shafarevich group
of an elliptic curve,
{\em LMS J. Comput. Math.} {\bf 19} (2016), suppl. A, 100--114.

\bibitem{bq-ctp}
T.A. Fisher, 
On binary quartics and the Cassels-Tate pairing,
{\em Res. Number Theory} {\bf 8} (2022), no. 4, Paper No. 74, 13 pp.

\bibitem{GJPST}
G. Grigorov, A. Jorza, S. Patrikis, W.A. Stein and C. Tarniţǎ,  
Computational verification of the Birch and Swinnerton-Dyer
conjecture for individual elliptic curves,
{\em Math. Comp.} {\bf 78} (2009), no. 268, 2397--2425.

\bibitem{Klenke}
T.A. Klenke,  
Visualizing elements of order two in the Weil-Ch\^atelet group,
{\em J. Number Theory} {\bf 110} (2005), no. 2, 387--395.

\bibitem{Mazur}
B. Mazur, 
Visualizing elements of order three in the Shafarevich-Tate group,
{\em Asian J. Math.} {\bf 3} (1999), no. 1, 221--232.

\bibitem{4desc}
J.R. Merriman, S. Siksek and N.P. Smart,  
Explicit 4-descents on an elliptic curve,
{\em Acta Arith.} {\bf 77} (1996), no. 4, 385--404.

\bibitem{ONeil}
C. O'Neil,  
The period-index obstruction for elliptic curves,
{\em J. Number Theory} {\bf 95} (2002), no. 2, 329--339.

\bibitem{Reid}
M. Reid,
{\em The complete intersection of two or more quadrics},
PhD thesis, University of Cambridge, 1972.

\bibitem{Sal}
P. Salberger,
{\em On the arithmetic of intersections of two quadrics
containing a conic}, 1993, \url{arXiv:2305.02289 [math.NT]}

\bibitem{SD}
H.P.F. Swinnerton-Dyer, The Brauer group of cubic surfaces,
{\em Math. Proc. Cambridge Philos. Soc.} {\bf 113} (1993), no. 3, 449--460.

\bibitem{Weil54}
A. Weil, 
Remarques sur un m\'emoire d'Hermite,
{\em Arch. Math. (Basel)} {\bf{5}}, (1954) 197--202.

\bibitem{Zarhin}
Yu.G. Zarhin,
Noncommutative cohomology and Mumford groups,
{\em Mat. Zametki} {\bf 15} (1974), 415--419.

\end{thebibliography}
\end{document}